
\documentclass[12pt]{amsart}
\usepackage{amsmath,amsthm,amssymb,amsfonts}
\usepackage[nobysame,abbrev,alphabetic]{amsrefs}
\usepackage{color}



\newtheorem{thm}{Theorem}[section]
\newtheorem{cor}[thm]{Corollary}
\newtheorem{lemma}[thm]{Lemma}



\theoremstyle{definition}
\newtheorem{defin}[thm]{Definition}
\newtheorem{rem}[thm]{Remark}
\newtheorem{exa}[thm]{Example}



\numberwithin{equation}{section}





\newtheorem{prop}[thm]{Proposition}



\begin{document}





\title[Isoperimetric inequality on CR-manifolds]{Isoperimetric inequality on CR-manifolds with nonnegative $Q'$-curvature}

\author{Yi Wang}
\address{Department of Mathematics, Johns Hopkins University, Baltimore MD 21218}
\email{ywang@math.jhu.edu}
\thanks{The research of the author is partially supported
by NSF grant DMS-1547878, and NSF grant DMS-1612015}
\author{Paul Yang}
\address{Department of Mathematics, Princeton University, Princeton, NJ 08540}
\email{yang@math.princeton.edu}
\thanks{The research of the author is partially supported
by NSF grant DMS-1509505.}

\date{}



\begin{abstract}
In this paper, we study contact forms on the three-dimensional Heisenberg manifold with its standard CR structure. We discover that the $Q'$-curvature, introduced by Branson, Fontana and Morpurgo \cite{BFM} on the CR three-sphere and then generalized to any pseudo-Einstein CR three manifold by Case and Yang \cite{ChY}, controls the isoperimetric inequality on such a CR-manifold. To show this, we first prove that the nonnegative Webster curvature at infinity deduces that the metric is normal, which is analogous to the behavior on a Riemannian four-manifold.

\end{abstract}

\maketitle

\section{Introduction} \label{sect:intro}
On a four dimensional manifold, the Paneitz operator $P_4$ and Branson's $Q$-curvature \cite{Branson} 
have many properties analogous to those of the Laplacian operator $\Delta_g$ and the Gaussian curvature $K_g$ on surfaces. 
The Paneitz operator is defined as $$P_g=\Delta^2+\delta(\frac{2}{3}Rg-2 Ric)d,$$
where $\delta$ is the divergence, $d$ is the differential, $R$ is the scalar curvature of $g$, and $Ric$
is the Ricci curvature tensor. 
The $Q$-curvature is defined as 
$$Q_g=\frac{1}{12}\left\{-\Delta R +\frac{1}{4}R^2 -3|E|^2 \right\},
$$
where $E$ is the traceless part of $Ric$, and $|\cdot|$ is taken with respect to the metric $g$.
The two most important properties for the pair $(P_g, Q_g)$ are that under the conformal change $g_{w}=e^{2w}g_0$, \\
1. $P_{g}$ transforms by $P_{g_w}(\cdot)=e^{-4w}P_{g_0}(\cdot)$;\\
2. $Q_{g}$ satisfies the fourth order equation
$$P_{g_{0}}w+2Q_{g_0}=2Q_{g_{w}}e^{4w}.$$
As proved by Beckner \cite{Beckner} and Chang-Yang \cite{CY}, the pair $(P_g, Q_g)$ also appears in the Moser-Trudinger inequality for higher order operators. 

On CR manifolds, it is a fundamental problem to study the existence and properties of CR invariant pairs analogous to  $(P_g, Q_g)$. Graham and Lee \cite{GL} has studied a fourth-order CR covariant operator with leading term $\Delta_b^2 +T^2$ and Hirachi \cite{Hirachi} has identified the $Q$-curvature which is related to $P$ through a change of contact form. However, although the integral of the $Q$-curvature on a compact three-dimensional CR manifold is a CR invariant, it is always equal to zero. And in many interesting cases when the CR three manifold is the boundary of a strictly pseudoconvex domains, by the work of \cite{FH}, the $Q$-curvature vanishes everywhere. As a consequence, it is desirable to search for some other invariant operators and curvature invariants on a CR manifold that are more sensitive to the CR geometry. 
The work of Branson, Fontana and Morpurgo \cite{BFM} aims to find such a pair $(P', Q')$ on the CR sphere. Later, the definition of $Q'$-curvature is generalized to all pseudo-Einstein CR manifolds by the work of Case-Yang \cite{CY} and that of Hirachi \cite{Hirachi2}.  
The construction uses the strategy of analytic continuation in dimension by Branson \cite{Branson}, restricted to the subspace of the CR pluriharmonic functions:
\begin{equation}\label{df}P'_4:=\lim_{n\rightarrow 1} \frac{2}{n-1} P_{4,n}|_{\mathcal{P}}.\end{equation}
Here $P_{4,n}$ is the fourth-order CR covariant operator that exists for every contact form $\theta$
by the work of Gover and Graham \cite{GG}. By \cite{GL}, the space of CR pluriharmonic functions $\mathcal{P}$ is always contained in the kernel of $P_{4,1}$. 
On the Heisenberg spaces with its standard contact structure, the expression of $P'$ simplifies to be 
\begin{equation}P' u=2 \Delta_b^2 u. \end{equation}

In this paper, we want explore the geometric meaning of this newly introduced conformal invariant $Q'$-curvature. 

In Riemannian geometry, a classical isoperimetric inequality on a complete simply connected surface $M^2$, called Fiala-Huber's \cite{Fiala}, \cite{Huber} isoperimetric inequality, states that
\begin{equation}\label{FialaHuber}
Vol(\Omega)\leq \frac{1}{2(2\pi-\int_{M^2}K_g^+ dv_g)} Area(\partial \Omega)^2,
\end{equation}
where $K_g^+$ is the positive part of the Gaussian curvature $K_g$. Also $\int_{M^2}K_g^+ dv_g< 2\pi$ is the sharp bound for the isoperimetric inequality to hold. 

In \cite{YW15}, the first author generalizes the Fiala-Huber's isoperimetric inequality to all even dimensions, replacing the role of the Gaussian curvature in dimension two by that of the $Q$-curvature in higher dimensions:

Let $(M^n,g)=(\mathbb{R}^n, e^{2u}|dx|^2)$ be a complete noncompact even dimensional manifold.
Let $Q^+$ and $Q^-$ denote the
positive and negative part of $Q_g$ respectively, and let $dv_g$ denote the volume form of $M$. Suppose $g= e^{2u}|dx|^2$ is a {\textit{normal}} metric, i.e.
\begin{equation}\label{normal}u(x)=
\displaystyle \frac{1}{c_n}\int_{\mathbb{R}^n} \log \frac{|y|}{|x-y|} Q_{g}(y) dv_g(y) + C,
\end{equation}
where $c_n=2^{n-2}(\frac{n-2}{2})!\pi^{\frac{n}{2}}$,
and $C$ is some constant.
If
\begin{equation}\label{assumption1}
\beta^+:= \int_{M^n}Q^{+}dv_g < c_n,
\end{equation}
and \begin{equation}\label{assumption2}
\beta^-:=\int_{M^n}Q^{-}dv_g < \infty,
\end{equation}
then $(M^n,g)$ satisfies the isoperimetric inequality with isoperimetric constant depending only on $n, \beta^+$ and
$\beta^-$.
Namely, for any bounded domain $\Omega\subset M^n$ with smooth boundary,
\begin{equation}\label{1.89}
|\Omega|_g\leq C(n, \beta^+,\beta^-) |\partial \Omega |_g^{\frac{n}{n-1}}.
\end{equation}
It is well known that if the scalar curvature is nonnegative at infinity, then one can show that the metric is a normal metric. For interested readers, the proof of such a fact when $n=4$ was given in \cite{CQY}. For higher even dimensions, one can prove by a similar manner. 

In the main result of this paper, we prove that the $Q'$-curvature and $P'$ operator are the relevant CR scalar invariant and CR covariant operator to study the isoperimetric inequalities in the CR setting. The Webster \cite{Webster} curvature at infinity imposes important geometric rigidity on the CR manifold. We also notice that the class of pluriharmonic functions $\mathcal{P}$ is the relevant subspace of functions for the conformal factor $u$.  We derive the following isoperimetric inequality on any CR three manifold with $Q'$ curvature assumptions.

\begin{thm}\label{main1}Let $(\mathbb{H}^1, e^{u}\theta)$ be a complete CR manifold, where $\theta$ denotes the standard contact form on the 
Heisenberg group $\mathbb{H}^1$ and $u$ is a pluriharmonic funcion on $\mathbb{H}^1$. Suppose additionally the $Q'$ curvature is nonnegative, the Webster scalar curvature is nonnegative at infinity and
\begin{equation}
\displaystyle \int_{\mathbb{H}^1} Q' e^{4u}  \theta\wedge d\theta< c'_1.
\end{equation}
Then the isoperimetric inequality is valid, i.e. for any bounded domain $\Omega$, 
\begin{equation}
Vol(\Omega)\leq C Area(\partial \Omega)^{4/3}.
\end{equation} 
Here $C$ depends only on the integral of the $Q'$-curvature, and $c'_1$ is the constant in the fundamental solution of $P'$ operator. (See Section \ref{sect:fundamentalsolution}.)\end{thm}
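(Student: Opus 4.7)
The plan is to follow the Riemannian strategy sketched in the introduction: first establish that the conformal factor $u$ admits an integral representation against the fundamental solution of $P'$ (i.e., the metric is \emph{normal}), then deduce pointwise decay of $e^u$ at infinity, and finally combine that decay with the classical Heisenberg isoperimetric inequality to conclude.

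For the normalization step, I would let $v$ be the convolution
\begin{equation*}
v(x) := \frac{1}{c'_1}\int_{\mathbb{H}^1}\log\frac{|y|_H}{|y^{-1}\cdot x|_H}\, Q'(y)\, e^{4u(y)}\,\theta\wedge d\theta(y),
\end{equation*}
where $|\cdot|_H$ is the Heisenberg norm. By construction, $P'v$ matches $Q'e^{4u}$ up to the constant defining the fundamental solution, and the prescribed $Q'$ equation for the conformal change $e^u\theta$ equates $P'u$ with the same multiple (since $Q'_0\equiv 0$ on flat $\mathbb{H}^1$). Consequently $w := u - v$ lies in the kernel of $P'$, and since both $u$ and the log-convolution $v$ are pluriharmonic one obtains $\Delta_b^2 w = 0$. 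The task then is to show $w$ is constant. I would adapt the Chang--Qing--Yang argument to the CR setting: a nontrivial polynomial pluriharmonic tail in $w$ would produce a Webster scalar curvature term at infinity of the wrong sign under the conformal change $e^u\theta$, contradicting the nonnegativity hypothesis. This is precisely the auxiliary ``normality of the metric'' result announced in the abstract.

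Once the normal representation $u = v + C$ is secured, the smallness $\beta := \int_{\mathbb{H}^1} Q'\,e^{4u}\,\theta\wedge d\theta < c'_1$ together with the kernel asymptotics $|y^{-1}\cdot x|_H \sim |x|_H$ for $|x|_H$ large and $|y|_H$ bounded gives the pointwise decay
\begin{equation*}
e^{u(x)} \leq C(1+|x|_H)^{-\beta/c'_1} \qquad \text{as } |x|_H\to\infty;
\end{equation*}
the assumption $Q'\geq 0$ is exactly what forces the bound in the correct direction (with no $\beta^-$ correction needed). Integrating in Heisenberg polar coordinates $dv=r^3\,dr\,d\sigma$ then yields the scalings $|B_H(0,R)|_g\lesssim R^{4(1-\beta/c'_1)}$ and $|\partial B_H(0,R)|_g\lesssim R^{3(1-\beta/c'_1)}$, which are consistent with the exponent $4/3$. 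To promote this from Heisenberg balls to arbitrary bounded domains $\Omega$, I would adapt the covering/rearrangement scheme of \cite{YW15}: split $\Omega$ into its intersection with a large Heisenberg ball---where the classical Heisenberg isoperimetric inequality applies together with bounds on $e^u$---and its complement, where the sharp tail decay of $e^u$ carries the estimate.

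The main obstacle is the normalization step. The subelliptic nature of $\Delta_b$ and the fact that $P'$ is only defined on the pluriharmonic subspace $\mathcal{P}$ mean that the Riemannian scalar curvature identity of Chang--Qing--Yang does not transfer mechanically. One needs a CR Bochner-type identity relating the sign of the Webster scalar curvature at infinity to the polynomial pluriharmonic growth of $u - v$, and the pluriharmonic constraint must be respected throughout the argument. Showing that nonnegative Webster scalar curvature at infinity genuinely rules out any nontrivial polynomial pluriharmonic tail is the novel technical ingredient, and I expect most of the paper's effort is directed there.
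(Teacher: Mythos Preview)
Your normalization step is on the right track and aligns with the paper: define $v$ by convolution with the fundamental solution, set $w=u-v$, and argue that $w$ must be constant using the Webster scalar curvature sign at infinity. The paper carries this out in detail (showing first $\Delta_b w$ is constant, then $Tw$ is constant, then $\partial_x w,\partial_y w$ are sub-Laplacian harmonic with linear growth, hence $w$ is affine, and finally ruling out the linear-in-$t$ term by a pointwise sign computation of $Re^{2u}$ along a well-chosen sequence). You correctly identify this as the main technical step.

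However, your route from normality to the isoperimetric inequality has a genuine gap. The pointwise decay $e^{u(x)}\leq C(1+|x|_H)^{-\beta/c'_1}$ does \emph{not} follow from the integral representation under the stated hypotheses: you only know $Q'e^{4u}\in L^1$ and $Q'\geq 0$, not that its mass is confined to bounded $|y|_H$. If $Q'e^{4u}$ carries mass arbitrarily far out, then near such points the kernel $\log(\rho(y)/\rho(y^{-1}x))$ is large and positive, so $u$ need not decay pointwise. Consequently the ball/complement splitting you propose cannot produce a constant depending only on $\beta$.

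The paper avoids this entirely by proving instead that $e^{4u}$ is an $A_1$ weight: write $e^{4u(y)}/e^{4u(x)}$ via the normal representation, apply Jensen's inequality against the probability measure $\alpha^{-1}Q'e^{4u}\,dv$ (this is where $Q'\geq 0$ enters), and reduce to the single-power weight $\rho(p^{-1}\cdot)^{-4\alpha/c'_1}$, which is $A_1$ precisely because $4\alpha/c'_1<4=N$. A separate general theorem then shows that any $A_1$ weight on $\mathbb{H}^n$ satisfies the weighted isoperimetric inequality, via a Vitali covering by balls on which $|\Omega\cap B|$ and $|\Omega^c\cap B|$ are comparable, combined with the Heisenberg $1$-Poincar\'e inequality. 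This averaged argument is robust against mass of $Q'e^{4u}$ escaping to infinity, which your pointwise-decay approach is not.
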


\begin{rem}
It is worth noting that the homogeneous dimension $N$ of $M^3$ is $4$. Therefore the power on the right hand side of the isoperimetric inequality is equal to $\frac{N}{N-1}=4/3$.
\end{rem}
\begin{rem}
We also remark that $c'_1$ is the critical constant for the validity of the isoperimetric inequality. In fact, there is a CR contact form $e^u \theta$ with $ \int_{\mathbb{H}^1} Q' e^{4u} \theta\wedge d\theta= c'_1$,  that does not satisfy the isoperimetric inequality. We give this example in Example \ref{exam:critical}.
\end{rem}

In fact, we have proved a stronger result.
\begin{thm}\label{main}
Suppose the $Q'$-curvature of $(\mathbb{H}^1, e^{u}\theta)$ is nonnegative. Suppose additionally the metric is normal and $u$ is a pluriharmonic function on $\mathbb{H}^1$. If 
\begin{equation}
\displaystyle \int_{\mathbb{H}^1} Q' e^{4u}  \theta\wedge d\theta< c'_1,
\end{equation} 
then $e^{4u}$ is an $A_1$ weight. 
\end{thm}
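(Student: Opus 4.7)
The plan is to exploit the normal metric representation to rewrite $e^{4u}$ as the exponential of a logarithmic potential against a finite positive measure on $\mathbb{H}^1$, and then to mimic the Fefferman--Stein/Chang--Qing--Yang argument on the Heisenberg group. By the normality hypothesis, $u$ has an integral representation of the form
\[
u(x)=\frac{1}{c'_1}\int_{\mathbb{H}^1}\log\frac{|y|_H}{|y^{-1}\cdot x|_H}\,d\mu(y)+C,\qquad d\mu:=Q'e^{4u}\,\theta\wedge d\theta,
\]
where $|\cdot|_H$ denotes the Kor\'anyi gauge. Because $Q'\geq 0$, the measure $d\mu$ is nonnegative, and its total mass $\beta:=\mu(\mathbb{H}^1)$ is strictly less than $c'_1$; set $\alpha:=4\beta/c'_1<4$. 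The goal is to verify the $A_1$ condition $\frac{1}{|B|}\int_B e^{4u}\,\theta\wedge d\theta\leq C\inf_B e^{4u}$ for every Kor\'anyi ball $B=B(x_0,r)$, with $C$ independent of $B$.

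I would fix such a ball and split $\mu=\mu|_{2B}+\mu|_{(2B)^c}$. For $y\in(2B)^c$ and $x\in B$, the left-invariant pseudo-triangle inequality yields $|y^{-1}\cdot x|_H\asymp|y^{-1}\cdot x_0|_H$ with absolute constants, and integrating against $\mu|_{(2B)^c}$ produces the uniform estimate
\[
\Bigl|\int_{(2B)^c}\log\frac{|y|_H}{|y^{-1}\cdot x|_H}\,d\mu(y)-\int_{(2B)^c}\log\frac{|y|_H}{|y^{-1}\cdot x_0|_H}\,d\mu(y)\Bigr|\leq C\beta\quad\text{for all }x\in B.
\]
Thus the far contribution to $e^{4u}$ is comparable to a single constant throughout $B$, and it suffices to establish the $A_1$ bound for the near factor $e^{\mathcal{L}(x)}$, where $\mathcal{L}(x):=-\tfrac{4}{c'_1}\int_{2B}\log|y^{-1}\cdot x|_H\,d\mu(y)$.

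Normalizing $\mu|_{2B}$ to a probability measure $\nu$ and setting $\alpha':=4\mu(2B)/c'_1\leq\alpha<4$, Jensen's inequality applied to the convex exponential yields
\[
e^{\mathcal{L}(x)}=\exp\!\Bigl(\alpha'\!\int\!-\log|y^{-1}\cdot x|_H\,d\nu(y)\Bigr)\leq\int|y^{-1}\cdot x|_H^{-\alpha'}\,d\nu(y).
\]
Integrating over $x\in B$, interchanging the order of integration, and using left-invariance of Haar measure together with the fact that $\mathbb{H}^1$ has homogeneous dimension $4$ and $\alpha'<4$, the inner integral $\int_B|y^{-1}\cdot x|_H^{-\alpha'}\,dx$ is bounded by a constant times $r^{4-\alpha'}$, so $\frac{1}{|B|}\int_B e^{\mathcal{L}}\lesssim r^{-\alpha'}$. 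On the other hand, applying Jensen to the convex function $-\log$ and using $|y^{-1}\cdot x|_H\leq 3r$ for $y\in 2B$ and $x\in B$ gives the pointwise lower bound $\mathcal{L}(x)\geq-\alpha'\log(3r)$, hence $\inf_{x\in B}e^{\mathcal{L}(x)}\geq(3r)^{-\alpha'}$. These two matching bounds yield $\frac{1}{|B|}\int_B e^{\mathcal{L}}\lesssim\inf_B e^{\mathcal{L}}$, and combining with the almost-constancy of the far factor completes the proof of the $A_1$ inequality with a constant depending only on $\beta$ and $c'_1$.

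The principal technical obstacle is geometric rather than analytic: one must verify that the Kor\'anyi gauge obeys a left-invariant pseudo-triangle inequality with uniform constants and that the Haar measure on $\mathbb{H}^1$ scales as $r^4$ on Kor\'anyi balls, so that the Jensen/Fubini computation transports faithfully from the Euclidean Chang--Qing--Yang setting. The subcritical hypothesis $\beta<c'_1$ is used in exactly two places: it forces $\alpha<4$ so that the kernel $|z|_H^{-\alpha}$ is locally integrable on $\mathbb{H}^1$ (controlling the near piece), and it keeps the oscillation of the far piece bounded by a universal constant of size $C\beta$.
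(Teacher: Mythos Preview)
Your proof is correct, and it follows a genuinely different (though closely related) route from the paper's.

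The paper avoids the near/far decomposition entirely. Working with the maximal function characterization of $A_1$, it writes
\[
\frac{M(e^{4u})(x)}{e^{4u(x)}}=\sup_{r>0}\frac{1}{|B(x,r)|}\int_{B(x,r)}\exp\!\left(\frac{4\alpha}{c'_1}\int_{\mathbb{H}^1}\log\frac{\rho(p^{-1}x)}{\rho(p^{-1}y)}\,\frac{Q'(p)e^{4u(p)}}{\alpha}\,dv(p)\right)dv(y),
\]
applies Jensen's inequality \emph{globally} against the single probability measure $\alpha^{-1}Q'e^{4u}\,dv$, swaps the order of integration, and reduces the whole problem in one stroke to the statement that $\rho(p^{-1}\cdot)^{-4\alpha/c'_1}$ is an $A_1$ weight with constant independent of $p$. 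That last fact is isolated as a separate lemma (the exponent $4\alpha/c'_1$ lies below the homogeneous dimension $4$).

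Your argument, by contrast, is in the Chang--Qing--Yang style: you localize to a fixed ball, show the far contribution is essentially constant via the pseudo-triangle inequality for the Kor\'anyi gauge, and then apply Jensen only to the near mass $\mu|_{2B}$, followed by a direct volume computation matching $\tfrac{1}{|B|}\int_B e^{\mathcal{L}}\lesssim r^{-\alpha'}$ against $\inf_B e^{\mathcal{L}}\gtrsim r^{-\alpha'}$. In effect, your near-piece estimate is an in-line proof of the $A_1$ property of the power weight. The paper's route is shorter and cleaner because it packages the only nontrivial estimate into one lemma about $\rho^{-\gamma}$; your route is more self-contained and makes the dependence on $\alpha<4$ fully explicit. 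Both use the subcritical hypothesis $\beta<c'_1$ at the same point (local integrability of $\rho^{-4\beta/c'_1}$), and the nonnegativity of $Q'$ at the same point (to apply Jensen).
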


We will introduce the meaning of $A_1$ weight in Section \ref{sect:main}.

\section*{Acknowledgements}
We would like to thank the referee for valuable suggestions to improve the presentation of the paper.

\section{Fundamental solution of $P'$ operator} \label{sect:fundamentalsolution}
In this section, we compute the fundamental solution of the Paneitz operator $P'$ on the Heisenberg group $\mathbb{H}^1$. Let $p$, $q$
be two points on $\mathbb{H}^1$. $\rho$ denotes the distance function on $\mathbb{H}^1$. We show that $P' (\log \rho (q^{-1} p))$ is equal to the real part of Szeg\"o kernel. Therefore, $P'$ restricted to the space of pluriharmonic functions has the fundamental solution $\log \rho (q^{-1} p)$.

Let us first consider the case for $p=(z, t)\in \mathbb{H}^1$, and $q=(0,0)\in \mathbb{H}^1$. Note that
\begin{equation}
\begin{split}
&\Delta_b\log \rho (q^{-1} p)\\
=&\displaystyle\Delta_{b} \log (|z|^4+ t^2)^{\frac{1}{4}}\\
=&\displaystyle\frac{1}{4}(\partial_x+2y \partial_t)(\partial_x+2y\partial_t)\log (|z|^4+ t^2)\\
&+\displaystyle \frac{1}{4}(\partial_y-2x\partial_t)(\partial_y-2x\partial_t)\log (|z|^4+ t^2).
\end{split}
\end{equation}

\begin{equation}
\begin{split}
&\displaystyle(\partial_x+2y \partial_t)(\partial_x+2y\partial_t)\log (|z|^4+ t^2)\\
=&\displaystyle(\partial_x+2y \partial_t)[\frac{1}{ (|z|^4+ t^2)} (4x|z|^2+4yt)]\\
=&\displaystyle \frac{-1}{ (|z|^4+ t^2)^2} (4x|z|^2+4yt)^2+  \frac{1}{ |z|^4+ t^2} (4|z|^2 +8x^2+8y^2)\\
=&\displaystyle \frac{1}{ (|z|^4+ t^2)^2} [-16(x^2 |z|^4 + 2xyt|z|^2 + y^2 t^2 ) + 12|z|^2 (|z|^4+ t^2) ].\\
\end{split}
\end{equation}

Similarly, one can see
\begin{equation}
\begin{split}
&\displaystyle(\partial_y-2x \partial_t)(\partial_y-2x\partial_t)\log (|z|^4+ t^2)\\
=& \displaystyle\frac{1}{ (|z|^4+ t^2)^2} [-16(y^2 |z|^4 -2xyt|z|^2 + x^2 t^2 ) + 12|z|^2 (|z|^4+ t^2) ].\\
\end{split}
\end{equation}

Thus, we obtain
\begin{equation}
\begin{split}
\displaystyle\Delta_{b} \log (|z|^4+ t^2)^{\frac{1}{4}}=&
\displaystyle\frac{1}{4(|z|^4+ t^2)^2}[-16 (|z|^6 + |z|^2 t^2)+ 24|z|^2 (|z|^4+ t^2) ]\\
=& \displaystyle\frac{2|z|^2}{ |z|^4+ t^2}.\\
\end{split}
\end{equation}

We now need to compute $\displaystyle\Delta_{b}\frac{|z|^2}{|z|^4+ t^2}$.

\begin{equation}\label{fund1}
\begin{split}
&\displaystyle(\partial_x+2y \partial_t)(\partial_x+2y\partial_t)\displaystyle\frac{|z|^2}{|z|^4+ t^2}\\
=&\displaystyle(\partial_x+2y \partial_t) [\frac{2x}{ |z|^4+ t^2} +  \frac{-|z|^2}{ (|z|^4+ t^2)^2} (4x|z|^2+4yt)]\\
=&\displaystyle \frac{-2x}{ (|z|^4+ t^2)^2} (4x|z|^2+4yt)+ \frac{2}{ |z|^4+ t^2}+ \frac{2|z|^2}{ (|z|^4+ t^2)^3} (4x|z|^2 +4yt)^2\\
&\displaystyle+ \frac{-|z|^2}{ (|z|^4+ t^2)^2}(4|z|^2 +8x^2+8y^2)+ \frac{-2x}{ (|z|^4+ t^2)^2} (4x|z|^2+4yt)\\
=&\displaystyle \frac{2}{ |z|^4+ t^2} +\frac{1}{ (|z|^4+ t^2)^2}  [-8x^2 |z|^2-16xyt-12|z|^4-8 x^2|z|^2) ]\\
&\displaystyle+ \frac{32|z|^2}{ (|z|^4+ t^2)^3}  (x |z|^2  + yt)^2 .\\
\end{split}
\end{equation}

Similarly, 
\begin{equation}\label{fund2}\displaystyle
\begin{split}
&\displaystyle(\partial_y-2x \partial_t)(\partial_y-2x\partial_t)\frac{|z|^2}{|z|^4+ t^2}\\
=&\displaystyle(\partial_y-2x \partial_t) [\frac{2y}{ |z|^4+ t^2} +  \frac{-|z|^2}{ (|z|^4+ t^2)^2} (4y|z|^2-4xt)]\\
=& \displaystyle\frac{-2y}{ (|z|^4+ t^2)^2} (4y|z|^2-4xt)+ \frac{2}{ |z|^4+ t^2}+ \frac{2|z|^2}{ (|z|^4+ t^2)^3} (4y|z|^2 -4xt)^2\\
&+ \displaystyle\frac{-|z|^2}{ (|z|^4+ t^2)^2}(4|z|^2 +8x^2+8y^2)+ \frac{-2y}{ (|z|^4+ t^2)^2} (4y|z|^2-4xt)\\
=& \displaystyle\frac{2}{ |z|^4+ t^2} +\frac{1}{ (|z|^4+ t^2)^2}  [-8y^2 |z|^2+16xyt-12|z|^4-8 y^2|z|^2) ]\\
&+ \displaystyle\frac{32|z|^2}{ (|z|^4+ t^2)^3}  (y |z|^2  - xt)^2 .\\
\end{split}
\end{equation}

Therefore, by (\ref{fund1}) and (\ref{fund2}) we have
\begin{equation}
\begin{split}
\displaystyle\Delta_{b}\frac{|z|^2}{|z|^4+ t^2}
=&\displaystyle \frac{4}{ |z|^4+ t^2} +\frac{1}{ (|z|^4+ t^2)^2}  (-8 |z|^4-24|z|^4-8 |z|^4 )\\
&+\displaystyle \frac{32|z|^2}{ (|z|^4+ t^2)^3}  ( |z|^6  + |z|^2 t^2) \\
=&\displaystyle \frac{4}{ |z|^4+ t^2} -\frac{8|z|^4}{ (|z|^4+ t^2)^2} \\
=&\displaystyle 4\frac{t^2-|z|^4}{ (|z|^4+ t^2)^2}. \\
\end{split}
\end{equation}

So we've show that

\begin{equation}
\begin{split}
\displaystyle P' (\log (|z|^4+ t^2 )^{\frac{1}{4}})
=&\displaystyle2\Delta_{b}\frac{|z|^2}{|z|^4+ t^2}\\
=&\displaystyle 8\frac{t^2-|z|^4}{ (|z|^4+ t^2)^2}. \\
\end{split}
\end{equation}

Note that this is equal to the real part of Szeg\"o kernel $Re (S_{\mathbb{H}^1}(p,q))$, up to a multiplicative constant. So we've proved that $\log (|z|^4+ t^2)^{\frac{1}{4}}$ is propositional to the fundamental solution of the operator $P'$ on the space of pluriharmonic functions at point $p=(z,t)$ and $q=(0,0)$. Since the norm $\rho$ and $P'$ are both left invariant, this computation is also valid for arbitrary value of $q$. Thus we've proved that $\log(\rho(q^{-1}p))$ is propositional to the fundamental solution of $P'$. We denote $G_{\mathbb{H}^1}(u,v)= c'_1\cdot \log \rho (q^{-1}p)$.

\section{Nonnegative Webster scalar curvature at $\infty$} \label{sect:normal}

In this section, we describe the property of CR-manifolds with nonnegative Webster scalar curvature at infinity. We will see this geometric condition has a strong analytic implication. We denote the volume form $\theta\wedge d\theta$ of $\mathbb{H}^1$ by $dv$.

\begin{prop}\label{prop:normal}
Let $\theta$ be the standard contact form of the Heisenberg group $\mathbb{H}^1$, and $\hat{\theta}=e^u \theta$ be the conformal change of it. Suppose $u\in \mathcal{P}$ is a pluriharmonic function on $\mathbb{H}^1$, $\Delta_b^2 u \in L^1 (\mathbb{H}^1)$ and $\hat{\theta}$ has nonnegative Webster scalar curvature near $\infty$, i.e. $-\Delta_b u\geq |\nabla_b u|^2$. Then $\hat{\theta}$ is a {\textit{normal}}, i.e. 
\begin{equation}
u(p)= \displaystyle \int_{\mathbb{H}^1}G_{\mathbb{H}^1}(p,q)P'u(q) dv(q)+ C,
\end{equation}
where $C$ is a constant.
\end{prop}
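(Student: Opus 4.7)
The plan is to adapt the Chang--Qing--Yang argument \cite{CQY} from the Riemannian four-dimensional setting. Set
\[
v(p) := \int_{\mathbb{H}^1} G_{\mathbb{H}^1}(p,q)\, P'u(q)\, dv(q),
\]
which is well-defined because $P'u = 2\Delta_b^2 u$ lies in $L^1(\mathbb{H}^1)$ and $G_{\mathbb{H}^1} = c_1'\log\rho$ is locally integrable on $\mathbb{H}^1$. The goal is to show that $h := u - v$ is a constant.

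First I would verify, using the identity $P'_p G_{\mathbb{H}^1}(p,q) = c_1'\,\mathrm{Re}(S_{\mathbb{H}^1}(p,q))$ from Section~\ref{sect:fundamentalsolution} together with the reproducing property of the Szeg\"o kernel on pluriharmonic functions, that $v$ is itself pluriharmonic and $P'v = P'u$ in the distributional sense. Since $P' = 2\Delta_b^2$ on $\mathcal{P}$, this forces $\Delta_b^2 h = 0$. A dominated convergence estimate applied to $\log\rho(q^{-1}p) = \log\rho(p) + O(\rho(p)^{-1})$ for fixed $q$ and large $\rho(p)$ then yields the asymptotic
\[
v(p) = \alpha\log\rho(p) + O(1), \qquad \rho(p)\to\infty,
\]
where $\alpha$ is a fixed multiple of $\int_{\mathbb{H}^1} P'u\, dv$.

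The main obstacle — and the only place the Webster curvature hypothesis enters — is to extract a matching one-sided bound on $u$ itself. The pointwise inequality $-\Delta_b u \geq |\nabla_b u|^2$ near infinity is equivalent, via $\Delta_b(e^u) = e^u(\Delta_b u + |\nabla_b u|^2)$, to $\Delta_b(e^u)\leq 0$ outside a large compact set, so $e^u$ is a positive sub-Laplacian superharmonic function at infinity. Comparing $e^u$ with the Folland fundamental solution $\rho^{-2}$ of $-\Delta_b$ on $\mathbb{H}^1$ via the minimum principle on exterior domains should yield an upper bound of the form $u(p) \leq C - 2\log\rho(p) + o(\log\rho(p))$, or at least a logarithmic upper bound sufficient to control $h = u - v$. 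I expect this comparison step to be the most delicate, since the superharmonicity is only assumed near infinity and one must extract the correct decay rate without prescribed boundary data; the pluriharmonicity of $u$ should provide the extra rigidity that makes the estimate go through.

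With both $u$ and $v$ possessing at most logarithmic growth at infinity, the pluriharmonic function $h = u - v$ grows at most logarithmically and satisfies $\Delta_b^2 h = 0$. A Liouville argument on $\mathbb{H}^1$ then concludes the proof: $\Delta_b h$ is $\Delta_b$-harmonic with decay at infinity, hence identically zero by the sub-Laplacian Liouville theorem, after which $h$ is itself $\Delta_b$-harmonic with sublinear growth, hence constant. This gives $u = v + C$ as required.
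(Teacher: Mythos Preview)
Your main gap is in step 4, the extraction of a logarithmic upper bound on $u$ from the superharmonicity of $e^u$ at infinity. The minimum principle for a positive superharmonic function on an exterior domain goes the wrong way: comparing with the harmonic function $C\rho^{-2}$ would at best yield $e^u \geq C\rho^{-2}$, i.e.\ a \emph{lower} bound $u \geq -2\log\rho - C$, not the upper bound you claim. Without any a priori information on the behaviour of $e^u$ at infinity, superharmonicity alone places no ceiling on its growth (the constant function $1$ is superharmonic). Consequently your asserted logarithmic control on $h=u-v$ is unjustified, and the final Liouville step---which further requires that $\Delta_b h$ decays, something you assert but do not derive---does not get off the ground.

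The paper's proof proceeds quite differently and never attempts to bound $u$ or $h$ directly. It first uses only the sign $\Delta_b u\leq 0$ near infinity, together with the decay of $\Delta_b v$, to show $\Delta_b w$ is $\Delta_b$-harmonic and bounded above, hence a constant $c_1\leq 0$. The pluriharmonic equation $\Delta_b^2 w + T^2 w = 0$ then forces $Tw\equiv c_2$. With $Tw$ constant, a sequence of commutator computations shows $\partial_x w$ and $\partial_y w$ are $\Delta_b$-harmonic; the full Webster curvature inequality $|\nabla_b u|^2\leq -\Delta_b u$ (not merely its sign) is used to bound $|\nabla_b w|$, giving at most linear growth of $\partial_x w$, $\partial_y w$, so these are linear. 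Completeness of $e^u\theta$ then rules out any nontrivial quadratic or linear dependence on $x,y$, reducing $w$ to $c_2 t + \text{const}$. The last and most delicate step eliminates $c_2$ by expanding $R e^{2u} = -\Delta_b(e^{c_2 t + v})$, establishing spherical-average decay of $|v|$, $|\nabla_b v|$, $|\Delta_b v|$, and exhibiting a sequence of points $p_i\to\infty$ at which $R(p_i)<0$ unless $c_2=0$. Your sketch misses both the essential use of completeness and this pointwise contradiction argument for $c_2$.
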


It is proved by \cite{BFM} that the Green's function for $P'_{\mathbb{S}^{3}}$ is given by \begin{equation}G_{\mathbb{S}^3}(\zeta, \eta)=\log |1-\zeta \cdot \bar{\eta} | .\end{equation}
It satisifes the equation 
\begin{equation}
P'_{\mathbb{S}^3}G_{\mathbb{S}^3} (u, v)= S_{\mathbb{S}^3}(u, v)-\frac{1}{vol(\mathbb{S}^3)},
\end{equation}
where $S_{\mathbb{S}^3}(u, v)$ is the real part of the Szeg\"o kernel. 
We proved in section \ref{sect:fundamentalsolution} that the fundamental solution for $P'_{\mathbb{H}^1}$ is given by $\log \rho(v^{-1} u)$. We recall that the homogeneous norm on $\mathbb{H}^1$ is given by $\rho (z, t)= (|z|^4+ t^2)^{1/4}$.

\begin{defin} Given $u\in \mathcal{P}$ such that $P' u \in L^1(\mathbb{H}^1)$. Define
$$v(p):= \int_{\mathbb{H}^1} G_{\mathbb{H}^1}(p, q)P' u(q)dv(q). $$
\end{defin}
This is well-defined when $P' u \in L^1(\mathbb{H}^1)$. We want to prove that $w:=u-v$ is a linear function in $t$. 

\begin{lemma} Under the same assumption as Proposition \ref{prop:normal}, we have
$\Delta_b w =constant$.
\end{lemma}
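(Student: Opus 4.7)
The plan is to prove this lemma in three steps: (i) show that $h := \Delta_b w$ is $\Delta_b$-harmonic on all of $\mathbb{H}^1$; (ii) use the Webster curvature hypothesis to show that $h$ is bounded above; and (iii) invoke the Liouville theorem for the sub-Laplacian on $\mathbb{H}^1$ to conclude $h$ is constant.

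For step (i), I would use the fundamental-solution identity from Section \ref{sect:fundamentalsolution}. Since $G_{\mathbb{H}^1}(p,q) = c'_1 \log\rho(q^{-1}p)$ is a fundamental solution for $P'$ on the pluriharmonic class, differentiating under the integral defining $v$ gives $P'v = P'u$ up to the Szeg\"o-projection piece, which pairs trivially against the pluriharmonic function $u$. Using $P' = 2\Delta_b^2$ on $\mathbb{H}^1$, this yields $\Delta_b^2 w = 0$, so $h$ is $\Delta_b$-harmonic.

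For step (ii), using the identity $\Delta_b \log\rho = 2|z|^2/\rho^4$ derived in Section \ref{sect:fundamentalsolution}, one computes
\[
\Delta_b v(p) = 2c'_1 \int_{\mathbb{H}^1} \frac{|z(q^{-1}p)|^2}{\rho(q^{-1}p)^4}\, P'u(q)\, dv(q).
\]
To show that $\Delta_b v$ tends to $0$ at infinity (or at least remains bounded), I would split the integration into $\{q : \rho(q^{-1}p) \ge R\}$, on which the kernel is bounded by $2R^{-2}$ so the contribution is dominated by $R^{-2}\|P'u\|_{L^1}$, and its complement, on which the kernel has only a locally integrable Riesz-type singularity on $\mathbb{H}^1$ (of homogeneous dimension $4$) while the $L^1$-mass of $P'u$ in balls centered at $p$ vanishes as $p \to \infty$. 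Combined with the Webster curvature hypothesis $\Delta_b u \le -|\nabla_b u|^2 \le 0$ outside a compact set and local smoothness of $h$, this shows that $h = \Delta_b u - \Delta_b v$ is bounded above on all of $\mathbb{H}^1$.

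For step (iii), I would invoke the Liouville theorem for the sub-Laplacian on the Heisenberg group: any $\Delta_b$-harmonic function on $\mathbb{H}^1$ bounded from one side is constant (this rests on the Harnack inequality of Bony--Jerison and the polynomial volume growth of $\mathbb{H}^1$). Applied to $h$, this yields $\Delta_b w \equiv$ constant. The main obstacle I expect is the decay estimate for $\Delta_b v$ in step (ii): the kernel $|z|^2/\rho^4$ is not in $L^\infty$, so boundedness of the convolution against $P'u \in L^1$ is not automatic and requires the splitting above, exploiting both the $\rho^{-2}$ decay of the kernel and the vanishing of the $L^1$-tail of $P'u$. A secondary subtlety is justifying differentiation under the integral in step (i), which should follow from the polynomial decay of the derivatives of $\log \rho$ together with $P'u \in L^1$.
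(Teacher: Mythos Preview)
Your proposal is correct and follows essentially the same three-step strategy as the paper: (i) $\Delta_b(\Delta_b w)=0$ from $P'w=0$, (ii) $\Delta_b w$ bounded above via $\Delta_b u\le 0$ near infinity together with decay of $\Delta_b v$, and (iii) Liouville for the sub-Laplacian.

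The one place where the paper proceeds slightly differently is your step (ii). Rather than arguing pointwise boundedness of $\Delta_b v$ at infinity, the paper writes the Poisson integral representation
\[
\Delta_b w(p)=\int_{\partial B(p,r)}\Delta_b w(q)\,K_r(p,q)\,d\sigma(q)
\]
and lets $r\to\infty$: since $\Delta_b u\le 0$ outside a compact set and $K_r\ge 0$, the $\Delta_b u$ contribution is nonpositive, and one only needs the \emph{spherical averages} of $\Delta_b v$ over $\partial B(p,r)$ to tend to zero (which the paper establishes in a subsequent lemma showing $\frac{1}{|\partial B_r|}\int_{\partial B_r}|\Delta_b v|\,d\sigma=O(r^{-2})$). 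This sidesteps exactly the obstacle you flag: your splitting argument for pointwise decay of $\Delta_b v$ runs into the fact that the kernel $|z|^2/\rho^4$ is unbounded near the diagonal, so $P'u\in L^1$ alone does not give pointwise control on the near part; passing through the Poisson representation reduces the requirement to an averaged estimate, which is what $L^1$ data naturally yields. Your direct route still works (smoothness of $u$ bounds the near contribution locally), but the Poisson-average formulation is the cleaner way to close the gap you anticipated.
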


\begin{proof}First, we observe that 
$$ P'w= P'u-P'v=0.
$$
We can then apply the mean value property to the function $\Delta_b w$ which satisfies the equation $\Delta_b (\Delta_b w)=0$. Let $K_{r}(x,y)$ denotes the Poisson kernel. We apply the Poisson integral formula to $\Delta_b w$ and derive
\begin{equation}\label{poisson}
\Delta_b  w (p)= \int_{\partial B(p, r)} \Delta_b w (q) K_r(p, q) dv(q),
\end{equation}
for arbitrary sphere $B(p, r)$ of radius $r$. Here the radius is with respect to the distance given by $\rho(\cdot)$ on $\mathbb{H}^1$.
Note that $\Delta_b u\leq -|\nabla_b u|^2\leq 0 $, and $ \Delta_b v $ tends to zero for large spheres $\partial B(p, r)$. Thus by taking $r\rightarrow \infty$,
$$\Delta_b w \leq 0,$$
at $\infty$. Thus $\Delta_b w$ is bounded from above by (\ref{poisson}) and the fact that the Poisson kernel is nonnegative. 

Now $\Delta_b w$ is bounded from above and $(\Delta_b( \Delta_b w)) =0$. Thus, analogous to the harmonic function on the Euclidean spaces, by the Liouville's theorem for $\Delta_b$ operator, we have
\begin{equation}
\Delta_b w=c_1.
\end{equation}
\end{proof}

Next we observe that besides $\Delta_b w=c_1$, $Tw$ is also a constant, because $\Delta_b^2 w+ T^2 w=0$. We denote the constant of $Tw$ by $c_2$. This allows us to show that
\begin{lemma} $w_x (x, y, t)$ is independent of $t$ variable, i.e. 
$$w_x (x, y, t)= w_x(x, y, 0).$$
\end{lemma}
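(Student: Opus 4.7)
The proof will be a very short commutation argument. The underlying smooth manifold of $\mathbb{H}^1$ is $\mathbb{R}^3$, and the Reeb vector field of the standard contact form (normalized so that the horizontal vector fields are $X_1=\partial_x+2y\partial_t$ and $X_2=\partial_y-2x\partial_t$, with $[X_1,X_2]=-4T$) is simply $T=\partial_t$. Consequently the constancy of $Tw$ established in the preceding paragraph, $Tw=c_2$, is literally the identity $\partial_t w=c_2$.

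I will then differentiate this identity in the Euclidean $x$ variable. Since $\partial_x$ and $\partial_t$ are coordinate vector fields on $\mathbb{R}^3$, they commute, so
\begin{equation*}
\partial_t(w_x)=\partial_t\partial_x w=\partial_x\partial_t w=\partial_x(c_2)=0.
\end{equation*}
Hence $w_x$ depends only on $(x,y)$, which is the desired conclusion $w_x(x,y,t)=w_x(x,y,0)$.

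No further analytic input is required; the only step that could be labelled a potential pitfall is correctly identifying $T$ with $\partial_t$ in the chosen model, but this is standard for the Heisenberg group. Structurally, I expect this lemma to be one of a trio of elementary reductions: an analogous argument in the $y$ variable will give $w_y(x,y,t)=w_y(x,y,0)$, and combined with the equations $\Delta_b w=c_1$ and $Tw=c_2$ already in hand, these together should pin down the dependence of $w$ on $t$ and ultimately yield the constancy of $w$ needed to conclude Proposition \ref{prop:normal}.
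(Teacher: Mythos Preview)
Your argument is correct and is essentially the same as the paper's: the paper applies the commuting fields $X$ and $T$ to $Tw=c_2$ and then expands $Xw=w_x+2yw_t$ to conclude $Tw_x=0$, whereas you apply $\partial_x$ and $\partial_t$ directly, which is a minor simplification of the same commutation idea.
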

\begin{proof}
We recall that 
$$X= \partial_x+ 2y \partial_t, \quad Y=\partial_y-2x\partial_t, \quad T=\partial_t.$$
Since $X$ and $T$ commute, we have
\begin{equation}
\begin{split}
0=& XTw=TX w= T(w_x+ 2y w_t)\\
=& T w_x.\\
\end{split}
\end{equation}
Thus $w_x$ is independent of $t$ variable. In other words, for any $(x, y, t)$,
$$w_x (x, y, t)= w_x(x, y, 0).$$
\end{proof}
Similarly since $Y$ and $T$ commute, $w_y$ is independent of $t$ variable.

\begin{lemma} $w_{xx}+ w_{yy}$ is independent of $t$ variable, i.e. 
$$w_{xx}(x, y, t)+ w_{yy}(x, y, t)=w_{xx}(x, y, 0)+ w_{yy}(x, y, 0).$$
\end{lemma}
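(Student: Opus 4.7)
The plan is to combine the two constancy facts already obtained, namely $\Delta_b w = c_1$ and $Tw = c_2$, with the commutation relations of the standard Heisenberg vector fields $X = \partial_x + 2y\partial_t$, $Y = \partial_y - 2x\partial_t$, $T = \partial_t$. The key observation is that $T$ commutes with both $X$ and $Y$, so differentiating the identity $Tw = c_2$ by $T$, $X$, and $Y$ will pin down three mixed derivatives of $w$, after which the definition of $\Delta_b$ will directly produce $w_{xx}+w_{yy}$.

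First I would apply $T$ to $Tw = c_2$ to obtain $w_{tt} = 0$. Applying $X$ to $Tw = c_2$ and using $XT = TX$ gives $T(Xw) = 0$, which expands as $w_{xt} + 2y\, w_{tt} = 0$; combined with the previous step this forces $w_{xt} = 0$. The same argument with $Y$ in place of $X$ yields $w_{yt} = 0$.

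Next I would expand the sub-Laplacian of $w$ via
\begin{equation*}
4\Delta_b w \;=\; X^2 w + Y^2 w \;=\; w_{xx} + w_{yy} + 4y\, w_{xt} - 4x\, w_{yt} + 4(x^2+y^2)\, w_{tt},
\end{equation*}
which is the same kind of computation already carried out at the beginning of Section \ref{sect:fundamentalsolution}. The three identities $w_{tt} = w_{xt} = w_{yt} = 0$ eliminate the last three terms, leaving $w_{xx} + w_{yy} = 4\Delta_b w = 4c_1$. Thus $w_{xx}+w_{yy}$ is in fact constant on $\mathbb{H}^1$, which is slightly stronger than the independence from $t$ claimed by the lemma.

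There is no real obstacle once the two preceding lemmas are accepted as input: the argument reduces to three applications of the commutator relation $[X,T]=[Y,T]=0$ and a careful expansion of $X^2 + Y^2$. The only point requiring attention is the sign and coefficient bookkeeping of the cross terms involving $w_{xt}$, $w_{yt}$, and $w_{tt}$ inside $X^2$ and $Y^2$, but this is exactly the computation already performed at the start of Section \ref{sect:fundamentalsolution}, so it is safely available.
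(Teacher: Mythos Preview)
Your argument is correct and relies on the same two facts the paper uses---constancy of $\Delta_b w$ and of $Tw$---together with the commutation $[T,X]=[T,Y]=0$. The paper proceeds by applying $T$ to the expanded form of $\Delta_b w = c_1$ and simplifying term by term to reach $T(w_{xx}+w_{yy})=0$. You instead first isolate the consequences $w_{tt}=w_{xt}=w_{yt}=0$ of $Tw=c_2$ (the middle one being exactly the content of the preceding lemma) and then read off $w_{xx}+w_{yy}=\Delta_b w$ directly from the static expansion of the sub-Laplacian. This is a mild reorganization rather than a genuinely different idea, but it does yield the slightly sharper statement that $w_{xx}+w_{yy}$ is globally constant, not merely $t$-independent. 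One small slip: in the paper's convention $\Delta_b = X^2+Y^2$ (see the opening computation of Section~\ref{sect:fundamentalsolution} and the line $T\Delta_b w = T[(XX+YY)]w$ in the paper's own proof), so the factor of $4$ in your displayed identity should be dropped; this does not affect the logic.
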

\begin{proof}
This can be seen from the following computation
\begin{equation}
\begin{split}
0=&T\Delta_b w\\
=&  T[(XX+ YY)] w\\
=&T[(\partial_x+ 2y T)(\partial_x+ 2y T)+(\partial_y- 2x T)(\partial_y- 2x T)]w\\
=&T[w_{xx} +2y T\partial_x w+\partial_x(2yTw) + 2yT(2yTw) \\
&+w_{yy}-\partial_{y}(2xTw)- 2xT(\partial_y w)+ 2xT(2xTw) ].\\
\end{split}
\end{equation}
By the fact that $Tw$ is a constant, and that $T$ commutes with both $\partial_x$ and $\partial_y$, we obtain the above is equal to
$$T(w_{xx}+w_{yy}).$$ 
Thus the lemma holds.
\end{proof}

\begin{lemma}\label{lemma:subharmonic}
$$\Delta_b \partial_x w=0 \quad \Delta_b \partial_y w=0. $$
\end{lemma}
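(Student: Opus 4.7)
The plan is to exploit what the previous two lemmas give for free: both $\Delta_b w$ and $Tw$ are constants on $\mathbb{H}^1$, so every partial derivative of either vanishes identically. Given this, reaching $\Delta_b(\partial_x w)=0$ and $\Delta_b(\partial_y w)=0$ reduces to swapping $\Delta_b$ past $\partial_x$ and $\partial_y$, which is cleanest when carried out in terms of the horizontal vector fields on $\mathbb{H}^1$ rather than the Euclidean partials.

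I would first work with $X=\partial_x+2y\partial_t$ instead of $\partial_x$ directly. Writing $\Delta_b=X^2+Y^2$ and using the Heisenberg commutation relations $[X,Y]=-4T$ and $[X,T]=[Y,T]=0$ together with the Leibniz identity $[AB,C]=A[B,C]+[A,C]B$, one computes
\[
[\Delta_b,X]=[Y^2,X]=Y[Y,X]+[Y,X]Y=4YT+4TY=8\,YT,
\]
the last equality using the centrality of $T$. Therefore
\[
\Delta_b(Xw)=X(\Delta_b w)+[\Delta_b,X]w=X(c_1)+8\,Y(Tw)=0+8\,Y(c_2)=0.
\]

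To transfer this conclusion back to $\partial_x w$, I would write $Xw=\partial_x w+2y\,Tw=\partial_x w+2c_2\,y$. A direct check using $Xy=0$ and $Yy=1$ shows $X^2 y=Y^2 y=0$, hence $\Delta_b y=0$, so applying $\Delta_b$ to the previous identity yields
\[
\Delta_b(\partial_x w)=\Delta_b(Xw)-2c_2\,\Delta_b y=0.
\]
The identical argument with $Y$ in place of $X$ gives $[\Delta_b,Y]=-8\,XT$ and $\Delta_b x=0$, hence $\Delta_b(\partial_y w)=0$.

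The one step that genuinely requires care is the bookkeeping of the commutator $[\Delta_b,X]$, in particular getting the sign in $[X,Y]=-4T$ right and using the centrality of $T$ to combine the two $YT$ terms. Beyond that the argument is purely algebraic; the analytic input (constancy of $\Delta_b w$ and $Tw$) is entirely inherited from the preceding lemmas, and no regularity or decay hypothesis on $w$ enters at this stage.
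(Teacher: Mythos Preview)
Your proof is correct and rests on the same two facts the paper uses, namely that $\Delta_b w$ and $Tw$ are constants; the paper reaches the conclusion by a direct coordinate expansion of $\Delta_b\partial_x w$, observing that every cross term contains a factor $T\partial_x w=\partial_x(Tw)=0$ and hence reduces to $w_{xxx}+w_{xyy}=\partial_x(\Delta_b w)=0$. Your route via the commutator identity $[\Delta_b,X]=8YT$ followed by the correction $\partial_x w=Xw-2c_2 y$ with $\Delta_b y=0$ is a tidier packaging of exactly the same cancellation.
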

\begin{proof}
If $\Delta_b$ and $\partial_x$ commute, then since $\Delta_b w=c_1$, we prove the lemma. 
In general, $\Delta_b$ and $\partial_x$ might not commute. However, we will use the fact that $Tw$ is a constant to achieve the goal. 
\begin{equation}
\begin{split}
\Delta_b \partial_x w=&[(\partial_x+ 2y T)(\partial_x+ 2y T)\partial_x w+ (\partial_y- 2x T)(\partial_y- 2x T)\partial_x w]\\
=&w_{xxx}+\partial_x(2yT\partial_x w)+ 2yT\partial_x(\partial_x w) + 2yT (2yT \partial_x w)\\
&+ w_{xyy}- 2x T \partial_y \partial_x w -\partial_y (2x T\partial_x w) + 2xT(2x T\partial_x w)\\
=& w_{xxx}+ w_{xyy}.\\
\end{split}
\end{equation}
The last equality uses the fact that $T$ commutes with both $\partial_x$ and $\partial_y$; and the fact that $Tw$ is a constant. Thus cross terms
$$\partial_x(2yT\partial_x w); \quad 2yT\partial_x(\partial_x w); \quad 2yT (2yT \partial_x w);$$ $$2x T \partial_y \partial_x w; \quad \partial_y (2x T\partial_x w); \quad 2xT(2x T\partial_x w)$$
vanishes
\end{proof}

\begin{lemma}\label{lemma:lineargrowth}$|w_x|$ and $|w_y|$ are at most of linear growth.
\end{lemma}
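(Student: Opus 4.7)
The plan is to decompose $w = u - v$ and to show that the subelliptic gradient $|\nabla_b w|$ is bounded at infinity. The linear growth of $w_x$ and $w_y$ in the Euclidean sense on $\mathbb{R}^2$ then follows immediately from the identity $w_x = Xw - 2y \cdot Tw = Xw - 2yc_2$ (and analogously $w_y = Yw + 2xc_2$), using the previously established constancy $Tw \equiv c_2$: indeed, this gives $|w_x(x,y,t)| \leq |Xw| + 2|y||c_2| \leq C(1+|z|)$ once $|Xw|$ is bounded, and similarly for $w_y$.

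For the bound on $|\nabla_b v|$, I would use the integral representation $v(p) = c'_1 \int_{\mathbb{H}^1} \log\rho(q^{-1}p)\, P'u(q)\, dv(q)$ combined with $P'u \in L^1(\mathbb{H}^1)$. The key point is that $X_p\log\rho(q^{-1}p)$ and $Y_p\log\rho(q^{-1}p)$ are left-invariant kernels homogeneous of degree $-1$ under the Heisenberg dilations, so $|\nabla_b \log\rho(q^{-1}p)| \leq C/\rho(q^{-1}p)$. Splitting the convolution into the far part (where the kernel is small and $P'u \in L^1$ suffices) and the near part (where $1/\rho$ is locally integrable, since the homogeneous dimension of $\mathbb{H}^1$ is $4$), one concludes that $|\nabla_b v(p)|$ grows at most like $O(\log \rho(p))$, which is certainly sublinear.

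For the bound on $|\nabla_b u|$, I would exploit the Webster scalar curvature hypothesis $-\Delta_b u \geq |\nabla_b u|^2$ at infinity. Rewriting this as $\Delta_b(e^u) \leq 0$ at infinity shows that $e^u$ is a positive superharmonic function on the exterior of some compact set. Transience of $\Delta_b$ on $\mathbb{H}^1$ (homogeneous dimension $4$) and comparison with the positive Green's function then force $e^u$ to be bounded at infinity, so $u \leq C$ outside a compact set. A subelliptic Caccioppoli estimate — or a Bochner identity on $\mathbb{H}^1$ applied to the pluriharmonic $u$, combined with the pointwise inequality $|\nabla_b u|^2 \leq -\Delta_b u$ — then upgrades this upper bound on $u$ to a uniform pointwise bound $|\nabla_b u| \leq C$ at infinity. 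Summing the two contributions, $|\nabla_b w|$ is bounded at infinity and the linear growth claim follows.

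The main technical obstacle is the passage from the superharmonicity of $e^u$ to a pointwise bound on $|\nabla_b u|$. In the Riemannian setting, this is a standard gradient estimate; on $\mathbb{H}^1$, one must either invoke a subelliptic Bernstein-type estimate or carefully exploit the pluriharmonicity of $u$ to cancel the twisting terms that appear in the Bochner identity for the sub-Laplacian. The bound on $|\nabla_b v|$ is routine by contrast, relying only on the homogeneity of $\nabla_b \log\rho$ and the $L^1$ hypothesis on $P'u$.
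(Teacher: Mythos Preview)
Your overall strategy---bound $|\nabla_b w|$ at infinity and then deduce linear growth of $w_x$, $w_y$ from the relations $Xw = w_x + 2yc_2$, $Yw = w_y - 2xc_2$---matches the paper exactly, and your treatment of $|\nabla_b v|$ is in the right spirit (the paper simply asserts $|\nabla_b v|\to 0$ at infinity).

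The gap is in your handling of $|\nabla_b u|$, and you are making it much harder than necessary. The paper does not go through superharmonicity of $e^u$ or any subelliptic gradient estimate. Instead it feeds the previously established constancy $\Delta_b w = c_1$ back into the curvature inequality: since $u = w + v$, one has $-\Delta_b u = -c_1 - \Delta_b v$, and the Webster hypothesis $|\nabla_b u|^2 \le -\Delta_b u$ near infinity then gives the pointwise bound $|\nabla_b u|^2 \le -c_1 - \Delta_b v$. As $\Delta_b v\to 0$ at infinity (same kernel argument as for $\nabla_b v$, with the kernel $\Delta_b\log\rho$ homogeneous of degree $-2$), this bounds $|\nabla_b u|^2$ by $-c_1 + 1$ outside a compact set. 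Combined with $|\nabla_b v|\to 0$ and $|\nabla_b w|^2\le 2|\nabla_b u|^2 + 2|\nabla_b v|^2$, one gets $|\nabla_b w|$ bounded in one line---no Bochner, no Caccioppoli.

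Your proposed route has real difficulties beyond being unnecessary. Positive $\Delta_b$-superharmonic functions on an exterior domain of $\mathbb{H}^1$ need not be bounded (superpose translates of the Green's function with poles marching to infinity), so transience alone does not force $u\le C$. And even granting an upper bound on $u$, a Caccioppoli estimate yields only local $L^2$ control of $\nabla_b u$, not the pointwise bound you need; upgrading via a subelliptic Bochner identity brings in torsion terms that do not obviously vanish from pluriharmonicity alone. The obstacle you flag is genuine for your approach---but it disappears entirely once you exploit $\Delta_b w = c_1$.
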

\begin{proof}
\begin{equation}\label{lineargrowth}|\nabla_b w|^2= w_x^2+ w_y^2 +4 c_2^2 (x^2+y^2)- 4c_2(xw_y-yw_x).\end{equation}
The right hand side is greater than
$$ (1-\alpha) (w_x^2 + w_y^2)+ 4c_2^2(-\frac{1}{\alpha}+1)(x^2+y^2),$$ for any $\alpha>0$. Let us fix $\alpha=1/2$.
Note that $|\nabla_b w|^2 \leq2 |\nabla_b u|^2+  2|\nabla_b v|^2$ and $$ |\nabla_b u|^2\leq -\Delta_b u$$ 
near $\infty$. Also, $|\nabla_b v| $ tends to $0$ near $\infty$. Thus 
$|\nabla_b w|^2\leq -2c_1+1$ near $\infty$, where $c_1\leq 0$ is the constant value of function $\Delta_b w$.  Thus $|\nabla_b w|$ has an upper bound. It follows that $|\partial_x w|$ and $|\partial_y w|$ are at most of linear growth.
\end{proof} 

This together with Lemma \ref{lemma:subharmonic} implies that $\partial_x w$ is a linear function.  Similarly, $\partial_y w$ is also a linear function. Suppose both $\partial_x w$ and $\partial_y w$ are not constant, then $w$ is a quadratic function. Since $c_1\leq 0$, we see that $e^u \theta$ gives rise to an incomplete metric. This is a contradiction. Thus both $\partial_x
 w$ and $\partial_y w$ are constant. So $w$ is linear in both $x$ and $y$. Again, $e^u \theta$ is incomplete unless $w$ is a constant in both $x$ and $y$. In other words, $w$ only depends on $t$. On the other hand, we also have $Tw=c_2$. So $w$ is a linear function of $t$. 
We now use the assumption that the Webster scalar curvature $R$ is nonnegative to show that $w$ must be a constant.\\

To do this, we first note that by a simple computation, 
$$-\Delta_{b} (e^{c_2 t})=-4 c_2^2  (x^2+y^2)e^{c_2t} <0.$$
Also \begin{equation}\label{wconstant}
\begin{split}
 &\displaystyle Re^{2u}=-\Delta_b (e^u)\\
=& \displaystyle -\Delta_b(e^{c_2t+ v})\\
=& \displaystyle -\Delta_b (e^{c_2 t}) e^v- 2 X(e^{c_2t})X(e^v)- 2 Y(e^{c_2t}) Y(e^v)  -\Delta_b (e^{v}) e^{c_2 t}\\
=&\displaystyle  -4 c_2^2 (x^2+y^2)e^{c_2t} e^v-4c_2y e^{c_2t}X(e^v) +4c_2x e^{c_2t}Y(e^v)\\
&\hspace{60mm} -(\Delta_b v+ |\nabla_b v|^2)e^v e^{c_2 t}.\\
\end{split}
\end{equation}\\

\begin{lemma}
\begin{equation}
\frac{1}{|\partial B_r|}\int_{\partial B_r} |\nabla_b v|(x) d\sigma(x) =O(\frac{1}{r}) \quad \mbox{as} \quad r\rightarrow \infty.
\end{equation}
\end{lemma}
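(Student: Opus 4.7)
The plan is to differentiate $v$ inside the integral sign, exploit the pointwise bound $|\nabla_b\log\rho|\lesssim 1/\rho$, and then analyze the spherical average via Fubini. Concretely, because $G_{\mathbb{H}^1}(p,q)=c'_1\log\rho(q^{-1}p)$, the left-invariance of $X,Y$ together with a direct computation of the same type as in Section \ref{sect:fundamentalsolution} yields $|\nabla_b^{p}G_{\mathbb{H}^1}(p,q)|\le C/\rho(q^{-1}p)$. Differentiating under the integral thus gives
\[
|\nabla_b v(p)|\le C\int_{\mathbb{H}^1}\frac{|P'u(q)|}{\rho(q^{-1}p)}\,dv(q).
\]

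Integrating in $p$ over $\partial B_r$ and swapping the order of integration reduces the problem to proving the uniform estimate
\[
I(q;r):=\int_{\partial B_r}\frac{d\sigma(p)}{\rho(q^{-1}p)}\le Cr^{2},\qquad q\in\mathbb{H}^{1}.
\]
I would split this into three cases. When $\rho(q)\le r/2$ or $\rho(q)\ge 2r$, the quasi-triangle inequality for $\rho$ forces $\rho(q^{-1}p)\ge r/C$ on $\partial B_r$, so $I(q;r)\lesssim|\partial B_r|/r\sim r^{2}$. When $r/2\le\rho(q)\le 2r$, the kernel has a near-singularity on the domain of integration, and I would use the layer-cake identity
\[
I(q;r)=\int_{0}^{\infty}\frac{F_{q,r}(s)}{s^{2}}\,ds,\qquad F_{q,r}(s):=\sigma\bigl(\{p\in\partial B_r:\rho(q^{-1}p)\le s\}\bigr),
\]
combined with the Ahlfors-regular bound $F_{q,r}(s)\le C\min(s,r)^{3}$; splitting the $s$-integral at $s=r$ again yields $I(q;r)\lesssim r^{2}$.

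Combining these two steps with $|\partial B_r|\asymp r^{3}$ and $P'u\in L^{1}(\mathbb{H}^{1})$ (which follows from the hypothesis $\Delta_b^{2}u\in L^{1}(\mathbb{H}^{1})$ in Proposition \ref{prop:normal}) gives
\[
\frac{1}{|\partial B_r|}\int_{\partial B_r}|\nabla_b v|\,d\sigma\le\frac{C}{r^{3}}\cdot\|P'u\|_{L^{1}}\cdot r^{2}=O(1/r),
\]
as required. The main obstacle is the case $\rho(q)\sim r$ in the bound for $I(q;r)$, because the kernel is nearly singular on the sphere itself and $\partial B_r$ is not smooth at its two characteristic poles $(0,\pm r^{2})$; the Ahlfors-regularity estimate $F_{q,r}(s)\lesssim s^{3}$ therefore needs to be verified by a localized graph parameterization of $\partial B_r$ both away from and in a neighborhood of the characteristic set. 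Once that ingredient is in place, the rest of the argument is a routine application of Fubini and the Heisenberg triangle inequality.
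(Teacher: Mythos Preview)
Your reduction is the same as the paper's: both compute $|\nabla_b\log\rho|\le 1/\rho$, differentiate under the integral, apply Fubini, and reduce to the uniform estimate $\int_{\partial B_r}\rho(q^{-1}p)^{-1}\,d\sigma(p)\lesssim r^{2}$, with the far regime $\rho(q)\not\sim r$ handled by the (quasi-)triangle inequality exactly as you suggest. The divergence is in the critical shell $\rho(q)\sim r$. The paper rescales to the unit sphere, writes the perimeter measure explicitly as $d\sigma=\sqrt{r^{2}(1+3r^{4})/((1-r^{2})(1+r^{2}))}\,r\,dr\,d\theta$ in polar coordinates on the $z$-plane, and then estimates the resulting two-dimensional integral by hand, splitting further into the cases $|z(q)|<1$ and $|z(q)|=1$ and tracking the singularity of $d\sigma$ at the characteristic circle $|z|=1$. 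Your route via the layer-cake identity and the upper Ahlfors bound $\sigma(\partial B_r\cap B(q,s))\lesssim s^{3}$ is cleaner and more conceptual, and it would generalize without change to $\mathbb{H}^{n}$; the trade-off is that you have postponed essentially all of the work into that Ahlfors estimate, which, as you note, requires care near the characteristic poles where the perimeter density blows up. The paper's explicit computation effectively \emph{is} a proof of that Ahlfors bound (integrated rather than pointwise), so the two arguments converge on the same analytic content.
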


\begin{proof}
By a direct computation, we have
$$X(\log (|z|^4+ t^2)^{1/4})= \frac{1}{\rho^4} (|z|^2x+ ty)\leq\frac{|z|}{\rho^2}\leq \frac{1}{\rho},$$
$$Y(\log (|z|^4+ t^2)^{1/4})= \frac{1}{\rho^4} (|z|^2y- tx),$$
and
$$|\nabla_b(\log (|z|^4+ t^2)^{1/4})|= \frac{|z|}{\rho^2}\leq \frac{1}{\rho}.$$
Therefore
\begin{equation}\begin{split}
&\displaystyle\frac{1}{|\partial B_r|}\int_{\partial B_r} |\nabla_b v|(x) d\sigma(x)\\
\leq&\displaystyle \int_{\mathbb{H}^1}\frac{1}{|\partial B_r|}\int_{\partial B_r} \frac{1}{\rho (y^{-1}x)}  |Q'(y)|e^{4u(y)}dv(y) dv(x).\\
\end{split}
\end{equation}
Now we need to show 
$$\frac{1}{|\partial B_r|}\int_{\partial B_r}  \frac{1}{\rho (y^{-1}x)} d\sigma(x) \leq O(\frac{1}{r}) .$$
where $C$ is indenpent of $y$.

This is true because we can dilate and take the integration over the unit sphere.
$$\frac{1}{|\partial B_r|}\int_{\partial B_r}  \frac{1}{\rho (y^{-1}x)} d\sigma(x)
=\frac{1}{r}\cdot \frac{1}{|\partial B_1|}\int_{\partial B_1}  \frac{1}{\rho ((r^{-1}y)^{-1}x)}  d\sigma(x).$$

If $|r^{-1}y|\geq 1+\delta$ or $\leq 1-\delta$, then  it is easy to see that
$$\frac{1}{|\partial B_1|}\int_{\partial B_1}  \frac{1}{\rho ((r^{-1}y)^{-1}x)}  d\sigma (x)\leq C$$ 
for a constant $C$ independent of $x$. 

If $1-\delta\leq |r^{-1}y|\leq 1+\delta$, then  we need to use spherical coordinates to prove 
\begin{equation}\label{sphereint0}\frac{1}{|\partial B_1|}\int_{\partial B_1}  \frac{1}{\rho ((r^{-1}y)^{-1}x)}  d\sigma (x)\leq C.\end{equation} 
It is obvious that we only need to deal with the limiting case when $r^{-1}y$ is on the unit sphere $\partial B_1$. Let $r^{-1}y=(y_1, y_2, s)$ and $x=(x_1, x_2, t)$. 
Let $(r', \theta')$ be the polar coordinates centered at $(y_1, y_2)$ in the $xy$-plane (by our notation $x=(x_1, x_2, t)$, it is the $x_1x_2$-plane).
\begin{equation}\rho((y_1, y_2, s), (x_1, x_2, t))
\geq \sqrt{(x_1-y_1)^2 +(x_2-y_2)^2 } = r'.
\end{equation}
The area form of the unit sphere is given by
$$d\sigma= \sqrt{(u_{x_1}-x_2)^2+(u_{x_2}+x_1)^2}dx_1dx_2,$$
where $u(x_1,x_2)=t=\pm\sqrt{1-(x_1^2+x_2^2)^2}$.
One can directly compute that 
$$d\sigma= \sqrt{\frac{r^2(1+3r^4)}{(1-r^2)(1+r^2)}}rdrd\theta.$$
Here $(r, \theta)$ are polar coordinates of $(x_1, x_2)$ centered at $(0,0)$. It is obvious that 
$rdrd\theta=r'dr'd\theta'.$
Therefore,
\begin{equation}\label{sphereint}
\begin{split}
&\int_{\partial B_1}  \frac{1}{\rho ((r^{-1}y)^{-1}x)}  d\sigma (x)\\
\leq & 2\int_{x_1^2+x_2^2\leq 1}  \frac{1}{r'} \sqrt{\frac{r^2(1+3r^4)}{(1-r^2)(1+r^2)}} r' dr'd\theta' .\\
\end{split}
\end{equation}
Case 1: $\sqrt{y_1^2+y_2^2}<1$.\\
We can denote $\sqrt{y_1^2+y_2^2}=1-\eta$, where $\eta>0$. Then the integral \eqref{sphereint} is bounded by
\begin{equation}\label{sphereint1}
\begin{split}
&C+ 2\int_{1-\frac{\eta}{2} \leq r\leq 1}   \sqrt{\frac{r^2(1+3r^4)}{(1-r^2)(1+r^2)}}  dr'd\theta' .\\
\end{split}
\end{equation}
Here $r$ is a function of $(r', \theta')$  by the change of variable formula.
The last inequality in \eqref{sphereint} is because $r=1$ is the only singularity of such an integration.

Now, since $\sqrt{y_1^2+y_2^2}=1-\eta$ and $1-\frac{\eta}{2} \leq r\leq 1$, we have
$r'\geq \frac{\eta}{2}.$ 
Thus $dr'd\theta'= \frac{r}{r'}drd\theta\leq \frac{2}{\eta} rdrd\theta.$
Therefore
\begin{equation}
\begin{split}
&\int_{1-\frac{\eta}{2} \leq r\leq 1}   \sqrt{\frac{r^2(1+3r^4)}{(1-r^2)(1+r^2)}}  dr'd\theta' .\\
\leq& \frac{2}{\eta}\int_{1-\frac{\eta}{2} \leq r\leq 1} \sqrt{\frac{r^2(1+3r^4)}{(1-r^2)(1+r^2)}} r drd\theta.\\
\end{split}
\end{equation}
The last integral is bounded, because 
\begin{equation}2\int_{ r\leq 1} \sqrt{\frac{r^2(1+3r^4)}{(1-r^2)(1+r^2)}} r drd\theta
=2\int_{ r\leq 1} d\sigma
=|\partial B_1|<\infty.
\end{equation}
Case 2: $\sqrt{y_1^2+y_2^2}=1$.\\
Without loss of generality, we can assume that $(y_1,y_2)=(1,0)$.
We adopt the notation that $\theta'$ is the angle between the ray and the positive $x_2$-axis. Since the unit sphere on the $x_1x_2$-plane is completely on the left hand side of $(1,0)$, we have $\theta'\in [0, \pi]$.\\
Now
\begin{equation}\begin{split}
&\int _{x_1^2+x_2^2\leq 1} \frac{1}{r'} \sqrt{\frac{r^2(1+3r^4)}{(1-r^2)(1+r^2)}} r' dr'd\theta' \\
\leq& \int_{0}^{\pi} \int_{r'>\epsilon/2} \sqrt{\frac{r^2(1+3r^4)}{(1-r^2)(1+r^2)}}  dr'd\theta'+\int_{0}^{\pi} \int_{r'\leq\epsilon/2} \sqrt{\frac{r^2(1+3r^4)}{(1-r^2)(1+r^2)}} dr'd\theta'.\\
\end{split}
\end{equation}
Note that
$$\int_{0}^{\pi} \int_{r'>\epsilon/2} \sqrt{\frac{r^2(1+3r^4)}{(1-r^2)(1+r^2)}}  dr'd\theta'\leq C$$
because when $r'>\epsilon/2$, we can apply the argument in Case 1 again, using
$dr'd\theta'=\frac{r}{r'}drd\theta\leq \frac{2}{\epsilon} rdrd\theta$.\\
For $r'\leq\epsilon/2$, by a direct computation, for very small $\epsilon$, $1-r \approx r'\theta'$. 
\begin{equation}\begin{split}\label{sphereint2}
&\int_{0}^{\pi} \int_{r'\leq\epsilon/2} \sqrt{\frac{r^2(1+3r^4)}{(1-r^2)(1+r^2)}}  dr'd\theta' \\
\leq& \int_{0}^{\pi} \int_{r'\leq\epsilon/2} \sqrt{\frac{r^2(1+3r^4)}{(1-r^2)(1+r^2)}} \frac{1}{\sqrt{r'\theta'}} dr'd\theta'.\\
\end{split}
\end{equation}
Since we have $$\sqrt{\frac{r^2(1+3r^4)}{(1+r)(1+r^2)}}<C ,$$
$$  \int_{r'<\epsilon/2}\frac{1}{\sqrt{r'}} dr'<\infty, $$ and $$
 \int_{0}^{\pi}\frac{1}{\sqrt{\theta'}}  d\theta'<\infty,$$ 
 the integration in the second line of \eqref{sphereint2} is finite. This completes the proof of \eqref{sphereint0}.

\end{proof}

By a similar proof, one can show the average estimate of $|\Delta_b v|$ and $|v|$ as well.
\begin{lemma}
\begin{equation}
\frac{1}{|\partial B_r|}\int_{\partial B_r} |\Delta_b v|(x) d\sigma(x) =O(\frac{1}{r^2}) \quad \mbox{as} \quad r\rightarrow \infty.
\end{equation}
\end{lemma}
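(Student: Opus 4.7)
The plan is to follow the strategy of the preceding lemma for $|\nabla_b v|$, accounting for the extra singularity arising from taking one more derivative. Starting from
\begin{equation*}
v(p) = \int_{\mathbb{H}^1} G_{\mathbb{H}^1}(p,q) P'u(q) \, dv(q),
\end{equation*}
differentiating under the integral and using the Section~\ref{sect:fundamentalsolution} computation $\Delta_b \log\rho(y^{-1}x) = 2|z|^2/\rho(y^{-1}x)^4$ gives the pointwise bound $|\Delta_b^{(x)} G_{\mathbb{H}^1}(x, y)| \leq C/\rho(y^{-1}x)^2$. Applying Fubini then reduces the problem to showing
\begin{equation*}
I_r(y) := \frac{1}{|\partial B_r|} \int_{\partial B_r} \frac{1}{\rho(y^{-1}x)^2} d\sigma(x) \leq \frac{C}{r^2}
\end{equation*}
uniformly in $y$, after which the conclusion follows from $Q' e^{4u} \in L^1(\mathbb{H}^1)$.

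Using the homogeneity of $\rho$ under the Heisenberg dilation and the scaling $|\partial B_r| = r^3 |\partial B_1|$, one factors $I_r(y) = r^{-2} \tilde I(w)$ where $\tilde I(w) := |\partial B_1|^{-1}\int_{\partial B_1} \rho(w^{-1}x)^{-2} d\sigma(x)$ and $w = \delta_{r^{-1}}(y)$. It then suffices to bound $\tilde I(w)$ uniformly in $w$. The case $|w|$ bounded away from $1$ is immediate, and for $w$ in a neighborhood of the unit sphere one reduces (by continuity and left-invariance) to $w = (1,0,0)$ and parametrizes $\partial B_1$ by polar coordinates $(r',\theta')$ in the $x_1 x_2$-plane centered at $(1,0)$, precisely as in the previous lemma.

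The main obstacle is the more singular integrand near $r' = 0$. A short computation with the group law gives $w^{-1}x = (x_1 - 1 + i x_2, \, t - 2x_2)$; on the sphere $t \approx \pm 2\sqrt{-r'\cos\theta'}$, so the $t$-coordinate of $w^{-1}x$ is approximately $\pm 2\sqrt{r'|\cos\theta'|}$ (the square-root term dominating the $O(r')$ correction), and $\rho(w^{-1}x)^4 \approx 4 r' |\cos\theta'|$. Combining $\rho^{-2} \sim (r'|\cos\theta'|)^{-1/2}$ with the area-form factor $(1-r)^{-1/2} \sim (r'|\cos\theta'|)^{-1/2}$ from the preceding lemma and the identity $r\, dr\, d\theta = r'\, dr'\, d\theta'$, a cancellation produces
\begin{equation*}
\tilde I(w) \leq C + C \int_{r' \leq \epsilon/2} \int \frac{1}{|\cos\theta'|}\, d\theta'\, dr'.
\end{equation*}

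A priori the inner integral diverges logarithmically at $\cos\theta' = 0$. The crucial observation rescuing the estimate is that the sphere constraint $x_1^2 + x_2^2 \leq 1$ forces $2\cos\theta' + r' \leq 0$, that is $|\cos\theta'| \geq r'/2$. With this cutoff, the $\theta'$-integral is bounded by $C\log(1/r')$, and integrating once more in $r'$ yields a finite constant independent of $w$. This gives the uniform bound $\tilde I(w) \leq C$, hence $I_r(y) \leq C/r^2$, and finally
\begin{equation*}
\frac{1}{|\partial B_r|} \int_{\partial B_r} |\Delta_b v|(x)\, d\sigma(x) \leq \frac{C}{r^2} \int_{\mathbb{H}^1} |Q'(y)| e^{4u(y)}\, dv(y) = O(r^{-2}),
\end{equation*}
as claimed.
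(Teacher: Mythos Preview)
Your overall strategy is right, and it is indeed what the paper intends by ``a similar proof'': bound $|\Delta_b G_{\mathbb{H}^1}(x,y)|\le C\rho(y^{-1}x)^{-2}$, reduce by dilation to a uniform bound for $\tilde I(w)=|\partial B_1|^{-1}\int_{\partial B_1}\rho(w^{-1}x)^{-2}\,d\sigma(x)$, and analyze the worst case $w=(1,0,0)$ in polar coordinates $(r',\theta')$ centered at $(1,0)$.

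There is, however, a genuine gap in the step ``the square-root term dominating the $O(r')$ correction, and $\rho(w^{-1}x)^4\approx 4r'|\cos\theta'|$''. What you need for the subsequent estimate is the \emph{lower} bound $\rho^4\gtrsim r'|\cos\theta'|$, i.e.\ $(t-2x_2)^2\gtrsim r'|\cos\theta'|$. On the sheet where $t>0$ and $x_2>0$ (and symmetrically for $t<0$, $x_2<0$) this fails: the constraint $|\cos\theta'|\ge r'/2$ only guarantees $|t|\approx 2\sqrt{r'|\cos\theta'|}\ge \sqrt{2}\,r'$, which is \emph{comparable} to $2|x_2|=2r'|\sin\theta'|$, not dominant. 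In fact along the curve $|\cos\theta'|=r'\sin^2\theta'$ (which lies inside the admissible region whenever $\sin^2\theta'\ge 1/2$) one has $t-2x_2=0$ exactly, so $\rho^4=r'^4$ and $\rho^{-2}=r'^{-2}$, far larger than your claimed bound $(r'|\cos\theta'|)^{-1/2}\sim r'^{-1}$. Thus the integrand you wrote down, $|\cos\theta'|^{-1}$, is not an upper bound for $\rho^{-2}\cdot(1-r)^{-1/2}\cdot r'$ near that curve.

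The result is nevertheless true, and the cleanest repair is to abandon $(r',\theta')$ near the singular point and use $(a,b):=(x_2,\,t-2x_2)$ as local coordinates on $\partial B_1$ near $(1,0,0)$. A short computation from the paper's formula for $d\sigma$ shows that the $(1-r)^{-1/2}$ singularity of the area form and the Jacobian $|\partial x_1/\partial t|=|t|/(2r^2x_1)$ cancel, giving $d\sigma\approx C\,dx_2\,dt=C\,da\,db$ (a smooth measure). Since $(x_1-1)^2+x_2^2\ge x_2^2=a^2$, one has $\rho^4\ge a^4+b^2$, and then
\[
\int_{\{|a|,|b|<\varepsilon\}}\frac{da\,db}{\sqrt{a^4+b^2}}
=\int_{|a|<\varepsilon}\!\int_{|s|<\varepsilon a^{-2}}\frac{ds}{\sqrt{1+s^2}}\,da
\le C\int_{0}^{\varepsilon}\log\!\Big(\tfrac{2\varepsilon}{a^{2}}\Big)\,da<\infty,
\]
using the substitution $b=a^2 s$. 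This gives $\tilde I(w)\le C$ uniformly and completes the proof. Your observation $|\cos\theta'|\ge r'/2$ is correct and useful for the complementary region, but is not sharp enough by itself to control the cancellation in $t-2x_2$.
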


\begin{lemma}
\begin{equation}
\frac{1}{|\partial B_r|}\int_{\partial B_r} | v|(x) d\sigma(x) =O(1) \quad \mbox{as} \quad r\rightarrow \infty.
\end{equation}
\end{lemma}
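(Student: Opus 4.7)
The plan is to run the same Fubini-and-dilation scheme used in the two preceding lemmas, now with the Green kernel $\log \rho$ in place of $1/\rho$ and $1/\rho^2$. Starting from $v(x) = c'_1 \int_{\mathbb{H}^1} \log \rho(y^{-1}x)\, P'u(y)\, dv(y)$, Fubini yields
\begin{equation*}
\frac{1}{|\partial B_r|}\int_{\partial B_r} |v|(x)\, d\sigma(x) \;\leq\; c'_1 \int_{\mathbb{H}^1} K_r(y)\, |P'u(y)|\, dv(y),
\end{equation*}
where $K_r(y) := \frac{1}{|\partial B_r|}\int_{\partial B_r} |\log \rho(y^{-1}x)|\, d\sigma(x)$. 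Since $P'u = 2\Delta_b^2 u \in L^1(\mathbb{H}^1)$, the task reduces to a uniform-in-$y$ bound for $K_r$.

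The second step is the dilation reduction used in the previous lemma: writing $x = r x'$ with $x' \in \partial B_1$, one has $\rho(y^{-1}x) = r\,\rho((r^{-1}y)^{-1} x')$, so
\begin{equation*}
K_r(y) = \frac{1}{|\partial B_1|}\int_{\partial B_1} \bigl|\log r + \log \rho((r^{-1}y)^{-1} x')\bigr|\, d\sigma(x').
\end{equation*}
When $|r^{-1}y|$ stays bounded away from $1$, the function $\log \rho((r^{-1}y)^{-1}\cdot)$ is bounded on $\partial B_1$ uniformly and the estimate is immediate. When $|r^{-1}y|$ lies in a neighborhood of $1$, the only issue is where $(r^{-1}y)^{-1} x'$ approaches $0$ on the unit sphere; I would handle this with the same local polar coordinates $(r',\theta')$ on $\partial B_1$ and the area-element identities $d\sigma = \sqrt{r^2(1+3r^4)/[(1-r^2)(1+r^2)]}\, r\, dr\, d\theta$ and $r\, dr\, d\theta = r'\, dr'\, d\theta'$ from the $|\nabla_b v|$ proof. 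The logarithmic singularity is strictly milder than the $1/\sqrt{r'\theta'}$ singularity treated there and is therefore integrable by an argument entirely parallel to the case analysis already carried out.

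The main obstacle is the $\log r$ term produced by the dilation: the naive bound reads $K_r(y) \leq |\log r| + C$ rather than $O(1)$. To reach the claimed conclusion, I would absorb the constant $c'_1 \log r \cdot \int_{\mathbb{H}^1} P'u\, dv$ into the additive constant $C$ appearing in the normal representation of Proposition \ref{prop:normal}---equivalently, replace $v$ on $\partial B_r$ by $v - c'_1 \log r \cdot \int P'u\, dv$. Since the Green representation is specified only up to an additive constant (parameterized by the $+C$ in the definition of a normal metric), this adjustment is legitimate. After it, the integrand is $\log \rho((r^{-1}y)^{-1} x')$, whose $\partial B_1$-average is bounded uniformly in $r^{-1}y$ by the case analysis above, and integration against $|P'u|\, dv \in L^1$ closes the estimate.
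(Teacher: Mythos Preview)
Your overall scheme---Fubini, dilation to $\partial B_1$, and the same case analysis in the coordinates $(r',\theta')$---is precisely what the paper intends by ``a similar proof,'' and your remark that the logarithmic singularity of $\log\rho$ is strictly milder than the $1/\rho$ singularity already handled is correct: the sphere integral $\int_{\partial B_1}\bigl|\log\rho\bigl((r^{-1}y)^{-1}x'\bigr)\bigr|\,d\sigma(x')$ is indeed bounded uniformly in $r^{-1}y$ by the identical case split.

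The flaw is in your treatment of the $\log r$ term. You propose to ``absorb $c'_1\log r\cdot\int P'u$ into the additive constant $C$'' from the normal representation, but that $C$ is a \emph{single} real number, fixed once $v$ is chosen; it cannot depend on $r$. Subtracting a different number on each sphere $\partial B_r$ is not a shift of $v$ by a fixed additive constant, so the adjustment you describe does not bound $\tfrac{1}{|\partial B_r|}\int_{\partial B_r}|v|\,d\sigma$ itself. What your computation actually proves is the weaker (and correct) estimate
\[
\frac{1}{|\partial B_r|}\int_{\partial B_r}\Bigl|\,v(x)-c'_1\log r\!\int_{\mathbb{H}^1}P'u\,\Bigr|\,d\sigma(x)=O(1).
\]
With the kernel $G_{\mathbb{H}^1}(p,q)=c'_1\log\rho(q^{-1}p)$ and a generic $P'u\in L^1$ of nonzero total mass, $v$ genuinely grows like a constant multiple of $\log|p|$ at infinity, so the lemma as literally stated cannot hold by subtracting a fixed constant; you have correctly located the obstruction, but it is a growth term, not a removable additive constant. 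For the downstream use---producing points $p_i$ with $e^{v(p_i)}$ bounded below and $|\nabla_b v|(p_i)+|\Delta_b v|(p_i)$ small---one must track this $\log r$ contribution explicitly rather than absorb it.
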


So there exists a sequence of points $\{p_i\}$, $|p_i|\rightarrow \infty$, such that
\begin{equation}| v|(p_i) \leq C, \end{equation}
\begin{equation} \ |\nabla_b v|(p_i)+  |\Delta_b v|(p_i)\leq \epsilon.\end{equation}
 Moreover, we can choose $p_i$, such that they lie in the half space $c_2t \geq 0$, and away from the $t$-axis. In other words, we can require that $c_2t(p_i)\geq 0$, and that $(x(p_i), y(p_i))$ does not tend to $(0, 0)$. Here we adopt the notation that $p_i= (x(p_i), y(p_i), t(p_i))$.

When $|x|+|y|\geq L$ for some $L>0$, we have
\begin{equation}
|4c_2y e^{c_2 t} X(e^v)|\leq |y| e^{c_2t} e^v |\nabla_b v|\leq \epsilon |y| e^{c_2 t}e^v\leq 
\epsilon (x^2+y^2) e^{c_2 t}e^v;
\end{equation}

\begin{equation}
|4c_2x e^{c_2 t} Y(e^v)|\leq |x| e^{c_2t} e^v |\nabla_b v|\leq \epsilon |x| e^{c_2 t}e^v
\leq \epsilon (x^2+y^2) e^{c_2 t}e^v;
\end{equation}
 
and
$$|\Delta_b (e^{v}) e^{c_2t }|= |(\Delta_b v+|\nabla_b v|^2)e^ve^{c_2t}|\leq \epsilon e^{v}e^{c_2t}.$$

Thus  
\begin{equation}\label{a}
\begin{split}
|2 X(e^{c_2t})X(e^v)+ 2 Y(e^{c_2t}) Y(e^v)  +\Delta_b (e^{v}) e^{c_2 t}|\leq &3
\epsilon (x^2+y^2) e^{c_2 t}e^v.\\
\end{split}
\end{equation}

We want to show $c_2=0$. We prove this by contradiction. Suppose $c_2\neq 0$. Then, 
by applying \eqref{a} in (\ref{wconstant}), we obtain that
\begin{equation}
\begin{split}
 &\displaystyle Re^{2u}(p_i)\\
=&\displaystyle  -4 c_2^2 (x^2+y^2)e^{c_2t} e^v-4c_2y e^{c_2t}X(e^v) +4c_2x e^{c_2t}Y(e^v)\\
& -(\Delta_b v+ |\nabla_b v|^2)e^v e^{c_2 t}\\
\leq& -3 c_2^2 (x(p_i)^2+y(p_i)^2)e^{c_2t(p_i)} e^v(p_i),\\
\end{split}
\end{equation}
when $\epsilon$ is small enough. 

By our choice of $\{p_i\}$, $|v(p_i)|\leq C$ and $c_2t(p_i)\geq 0$ for all $i$. Thus $e^v\geq \eta>0$, and $e^{c_2t(p_i)}\geq 1$. Since $c_2\neq 0$, $-3 c_2^2 (x(p_i)^2+y(p_i)^2)e^{c_2t(p_i)} e^v(p_i)<0$, as $i\rightarrow\infty$. In fact, this quantity goes to $-\infty$ unless $(x(p_i), y(p_i))$ tends to $(0,0)$. Because if $(x(p_i)^2+y(p_i)^2)$ is bounded, then $c_2 t(p_i)\rightarrow +\infty$. This contradicts the assumption on the nonnegativity of Webster scalar curvature $R$. Therefore $c_2=0.$

This completes the proof of Proposition \ref{prop:normal}.

\section{Main Results}\label{sect:main}

To begin this section, we recall some preliminary Poincar\'{e} inequalities for Heisenberg groups $\mathbb{H}^n$ of arbitrary dimension. Let us denote the homogenous dimension by $N$. For $\mathbb{H}^n$, $N=2n+2$.
\begin{prop}\label{prop:poincare} For any ball $B$ in Heisenberg group, 
\begin{equation}\label{eqn:poincare}
\displaystyle \int_B\int_B |g(x)-g(y)|dv(x)dv(y)\leq C |B|^{\frac{N+1}{N}} \int_{2B} |\nabla_b g|dv(x) .
\end{equation}
Here $2B$ denotes the concentric ball of $B$ with double radius, and $|\cdot|$ denotes the volume with respect to the Harr measure on $\mathbb{H}^n$.
\end{prop}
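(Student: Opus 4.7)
The plan is to reduce this double-integral Poincaré inequality to the standard $L^1$ Poincaré inequality on the Heisenberg group due to Jerison, which states that for any Kor\'anyi ball $B$ of radius $r$,
$$\int_B |g - g_B|\, dv \leq C r \int_{2B} |\nabla_b g|\, dv,$$
where $g_B = |B|^{-1}\int_B g\, dv$. This is the substantive ingredient; once it is in hand, the statement in the proposition follows from a short triangle-inequality and scaling argument.

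First I would record the scaling: the Haar measure on $\mathbb{H}^n$ is homogeneous of degree $N$ under the parabolic dilations, so $|B| \asymp r^N$ and therefore $r \asymp |B|^{1/N}$. Next, by the triangle inequality $|g(x)-g(y)| \leq |g(x)-g_B| + |g(y) - g_B|$, so integrating over $B \times B$,
$$\int_B\int_B |g(x)-g(y)|\, dv(x)\, dv(y) \;\leq\; 2\,|B|\int_B |g - g_B|\, dv.$$
Applying Jerison's inequality on the right and using $r \asymp |B|^{1/N}$ gives
$$2|B|\int_B |g - g_B|\, dv \;\leq\; C\, |B|\cdot r\int_{2B} |\nabla_b g|\, dv \;\leq\; C\, |B|^{\frac{N+1}{N}} \int_{2B} |\nabla_b g|\, dv,$$
which is the desired estimate.

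The main work, and the only genuinely nontrivial step, lies in Jerison's Poincar\'e inequality itself, whose proof requires a horizontal chain-of-balls construction and a representation formula of the form
$$|g(x) - g_B| \leq C\int_{2B} \frac{|\nabla_b g(y)|}{d(x,y)^{N-1}}\, dv(y),$$
followed by a fractional-integral estimate on the CC metric space $\mathbb{H}^n$. Since the proposition is invoked as a preliminary fact and the $L^1$ Poincar\'e inequality is standard in sub-Riemannian analysis, only the elementary reduction above is needed. The would-be obstacle, namely obtaining a workable substitute for the Euclidean segment joining $x$ and $y$ in the absence of a straight horizontal path, is already absorbed into the chain-of-balls argument underlying Jerison's result.
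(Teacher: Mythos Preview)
Your proof is correct and follows essentially the same route as the paper: both reduce the double integral to the $1$-Poincar\'e inequality of Jerison via the triangle inequality $|g(x)-g(y)|\leq |g(x)-g_B|+|g(y)-g_B|$, then use $r\asymp |B|^{1/N}$ (the paper states Jerison's inequality directly with $|B|^{1/N}$ on the right). Your additional remarks on the chain-of-balls argument behind Jerison's result are not in the paper's proof but are harmless background.
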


In fact, the above inequality is a direct consequence of the following 1-Poincar\'e inequality.
\begin{prop} \cite{Jerison} \label{prop:1poincareJerison}
For any ball $B$ in Heisenberg group, 
\begin{equation}
\displaystyle \int_B |g(x)-g_B|dv(x) \leq C |B|^{\frac{1}{N}} \int_{2B} |\nabla_b g|dv(x).
\end{equation}
Here $2B$ denotes the concentric ball of $B$ with double radius, $g_B$ denotes the average of $g(x)$ on $B$, and $|\cdot|$ denotes the volume with respect to the Harr measure on $\mathbb{H}^n$.
\end{prop}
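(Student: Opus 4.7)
The plan is to reduce the $1$-Poincar\'e inequality to a pointwise sub-representation formula expressing $g(x)-g_B$ as a fractional integral of $|\nabla_b g|$ against a Riesz-type kernel of order $N-1$, and then integrate using Fubini together with the local integrability of that kernel.

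\textbf{Step 1 (Horizontal representation).} For any two points $x,y$ in a ball $B = B(x_0,r)$, construct a horizontal curve $\gamma_{x,y}\subset 2B$ of length comparable to $d_{CC}(x,y)$ joining them. Existence is guaranteed by Chow's theorem applied to the left-invariant horizontal vector fields $X_1,Y_1,\dots,X_n,Y_n$ on $\mathbb{H}^n$; the uniform length bound uses left-invariance and homogeneity under the parabolic dilations. By the fundamental theorem of calculus along $\gamma_{x,y}$,
\begin{equation*}
|g(x)-g(y)| \le \int_{\gamma_{x,y}} |\nabla_b g|\,ds.
\end{equation*}

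\textbf{Step 2 (Pointwise sub-representation).} Averaging over $y\in B$,
\begin{equation*}
|g(x)-g_B| \le \frac{1}{|B|}\int_B \int_{\gamma_{x,y}} |\nabla_b g(\zeta)|\,ds(\zeta)\,dy.
\end{equation*}
Now parametrize the curves so that, for fixed $x$ and $z\in 2B$, the measure of the set of endpoints $y\in B$ for which the curve $\gamma_{x,y}$ passes within a fixed tube around $z$ is dominated by $d_{CC}(x,z)$ times a factor reflecting the transverse spreading. Carrying out this Fubini-type change of variables (this is the step where the sub-Riemannian geometry of $\mathbb{H}^n$ is really used) yields the pointwise bound
\begin{equation*}
|g(x)-g_B| \le C \int_{2B} \frac{|\nabla_b g(z)|}{d_{CC}(x,z)^{N-1}}\,dz,
\end{equation*}
which is a sub-elliptic analog of the classical Riesz-potential bound for $g(x)-g_B$ in the Euclidean case.

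\textbf{Step 3 (Integration and kernel estimate).} Integrating over $x\in B$ and swapping the order of integration,
\begin{equation*}
\int_B |g(x)-g_B|\,dv(x) \le C \int_{2B} |\nabla_b g(z)| \left( \int_B \frac{dv(x)}{d_{CC}(x,z)^{N-1}} \right) dv(z).
\end{equation*}
Since $N$ is the homogeneous dimension of $\mathbb{H}^n$ and $N-1<N$, the kernel $d_{CC}(x,z)^{-(N-1)}$ is locally integrable with respect to Haar measure, and for $z\in 2B$ a radial change of variables gives $\int_B d_{CC}(x,z)^{-(N-1)}\,dv(x) \le Cr = C|B|^{1/N}$. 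Substituting yields the stated inequality.

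\textbf{Main obstacle.} The genuinely non-trivial point is Step 2: the horizontal curves $\{\gamma_{x,y}\}$ must be chosen as a sufficiently ``spread-out'' family so that the change-of-variables in Fubini produces exactly the $d_{CC}(x,z)^{-(N-1)}$ singularity and not a worse one. In Euclidean space one simply uses straight line segments and polar coordinates, but on $\mathbb{H}^n$ the lack of an exponential map that is simultaneously isometric and horizontal forces one to work with concrete parametrizations of subunit curves and to exploit dilation invariance. This is exactly the technical core of Jerison's argument and is what makes the result deeper than its Euclidean counterpart.
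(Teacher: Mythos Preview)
The paper does not supply its own proof of this proposition: it is stated with a citation to Jerison \cite{Jerison} and later remarked that Jerison's method for the $2$-Poincar\'e inequality also yields the $1$-Poincar\'e version (with a pointer to \cite{HK}). So there is nothing in the paper to compare your argument against line by line; the authors are simply quoting a known result.

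That said, your outline is a recognizable route to the inequality, essentially the fractional-integral approach developed after Jerison's original work (e.g.\ by Franchi--Lu--Wheeden and in the spirit of \cite{HK}). Your Steps~1 and~3 are fine. The honest gap is Step~2: you assert the pointwise sub-representation $|g(x)-g_B|\le C\int_{2B}|\nabla_b g(z)|\,d_{CC}(x,z)^{1-N}\,dz$ but the ``Fubini-type change of variables'' producing it is precisely the hard part, and what you have written (``parametrize the curves so that\dots the measure of the set of endpoints\dots is dominated by\dots'') is a description of what one wants, not an argument. On $\mathbb{H}^n$ the standard way to make this rigorous is \emph{not} to track a single curve $\gamma_{x,y}$ per pair, but to use a chaining/telescoping argument over dyadic sub-balls $B(x,2^{-k}r)$ together with the weak $(1,1)$ or $(1,p)$ bound for the sub-Laplacian's Poisson-type averaging; alternatively one invokes Jerison's representation via the fundamental solution of a H\"ormander sum of squares. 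Either way, the geometric input is the doubling property plus the existence of a family of subunit curves whose ``spreading'' is quantified by the ball-volume growth, and this must be made explicit rather than asserted. You correctly flag this as the main obstacle, but as written your Step~2 is a statement of the goal rather than a proof of it.
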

This implies Proposition \ref{prop:poincare} because 
\begin{equation}\begin{split}
&\int_B\int_B |g(x)-g(y)|dv(x)dv(y)\\
\leq& \int_B\int_B |g(x)-g_B|+ |g(y)-g_B|dv(x)dv(y)\\
\leq & C |B|^{\frac{N+1}{N}} \int_{2B} |\nabla_b g|dv(x).\\
\end{split}
\end{equation}
David Jerison \cite{Jerison} proved  the stronger version of 2-Poincar\'e inequality: 
\begin{equation}
\displaystyle \int_B |g(x)-g_B|^2dv(x) \leq C |B|^{\frac{2}{N}} \int_{B} |\nabla_b g|^2dv(x).
\end{equation}
The same method also implies a stronger version of 1-Poincar\'e inequality. (See \cite{HK}. )
\begin{equation}
\displaystyle \int_B |g(x)-g_B|dv(x) \leq C |B|^{\frac{1}{N}} \int_{B} |\nabla_b g|dv(x) .
\end{equation}
For the purpose of this paper, we only need the weaker statement Proposition \ref{prop:poincare}, in which the integration is over $2B$ on the right hand side of the inequality.

Given a bounded domain with smooth boundary, as a special case of the above proposition, one can take $g$ to be (a smooth approximation of) the characteristic function $\chi_{\Omega}$, and derive 
\begin{equation}\label{poincareball} 
|B\cap \Omega|\cdot |B\cap \Omega^c|\leq C |\partial \Omega \cap 2B|\cdot |B|^{\frac{N+1}{N}}.\end{equation} 
This immediately gives rise to the following corollary.

\begin{cor}\label{cor:poincare}For all balls $B\subset \mathbb{H}^n$, such that, 
$$|B\cap \Omega|\geq \frac{1}{2} |B| \quad \mbox{and} \quad |B\cap \Omega^c|\geq \frac{1}{2} |B|,$$ we have,
by (\ref{poincareball}), 
$$ |B|^{\frac{N-1}{N} }\leq C |\partial \Omega \cap 2B|. $$
\end{cor}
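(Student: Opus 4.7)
The plan is to derive the corollary as an immediate consequence of inequality (\ref{poincareball}), which the preceding discussion has already established by applying the $1$-Poincaré inequality (Proposition \ref{prop:poincare}) to a smooth approximation of the characteristic function $\chi_\Omega$. Substituting the hypotheses $|B\cap \Omega|\geq \tfrac{1}{2}|B|$ and $|B\cap \Omega^c|\geq \tfrac{1}{2}|B|$ into the left-hand side of (\ref{poincareball}) gives
$$\tfrac{1}{4}|B|^{2} \;\leq\; |B\cap \Omega|\cdot |B\cap \Omega^c| \;\leq\; C\,|\partial\Omega\cap 2B|\cdot |B|^{\frac{N+1}{N}}.$$
Dividing through by $|B|^{\frac{N+1}{N}}>0$ and absorbing the numerical factor $4$ into the constant then yields $|B|^{\frac{N-1}{N}}\leq C\,|\partial\Omega\cap 2B|$, which is exactly the claimed inequality. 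This is the whole proof at the level of the corollary.

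Since the derivation is a one-line substitution, there is no genuine obstacle at the level of the corollary itself. The only item I would be careful to track, which is handled implicitly in the surrounding text, is the approximation of $\chi_\Omega$ by smooth functions so that Proposition \ref{prop:poincare} can legitimately be applied. Standard mollification adapted to the sub-Riemannian structure of $\mathbb{H}^n$, combined with the assumption that $\Omega$ has smooth boundary, ensures that $\int_{2B}|\nabla_b \chi_\Omega|\,dv$ can be interpreted as a constant multiple of the horizontal perimeter $|\partial\Omega\cap 2B|$; passing to the limit in (\ref{eqn:poincare}) then gives precisely (\ref{poincareball}). Note that the corollary is purely a metric-measure theoretic consequence of Jerison's Poincaré inequality on $\mathbb{H}^n$ and does not invoke any of the CR-specific structure, pluriharmonicity, or $Q'$-curvature hypotheses used elsewhere in the paper; it will later serve as the geometric input for the isoperimetric estimate once $e^{4u}$ is known to be an $A_1$ weight.
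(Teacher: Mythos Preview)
Your argument is correct and is exactly the derivation the paper intends: the corollary is stated as an immediate consequence of (\ref{poincareball}), and you carry out precisely that substitution and division. There is nothing to add at the level of the corollary.
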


\begin{thm}\label{thm:A1isop}Suppose $\omega(x)\geq 0$ is an $A_1$ weight on $\mathbb{H}^n$. Namely, there exists a constant $C_0$ (independent of $B$), so that for any ball $B\subset \mathbb{H}^n$, 
\begin{equation}
\displaystyle \frac{1}{|B|}\int_B \omega(p)dv(p)\leq C_0 \inf_{z\in B} \omega(z).
\end{equation}
Then the weighted isoperimetric inequality holds for $\omega(x)$: for any domain $\Omega\subset \mathbb{H}^n$ with smooth boundary, 
\begin{equation}
\displaystyle\int_{\Omega}\omega(x)dv(x)\leq C_1 (\int_{\partial \Omega} \omega(x)^{\frac{N-1}{N}} d\sigma(x))^{\frac{N}{N-1}},
\end{equation}
where $C_1$ only depends on the $A_1$ bound $C_0$ of $\omega(x)$ and the homogeneous dimension $N=2n+2$.
\end{thm}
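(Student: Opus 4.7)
The plan is to reduce the weighted inequality to the unweighted ball estimate of Corollary~\ref{cor:poincare} by a Vitali-type covering of $\Omega$, and then use the $A_1$ condition to replace $\omega$ pointwise by its infimum on each ball of the cover. Without loss of generality (by exhausting $\Omega$ with $\Omega\cap B(0,R)$ and passing to the limit) we may assume $\Omega$ is bounded.

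\textbf{Step 1 (Covering).} For every interior point $x\in\Omega$, the map $r\mapsto |B(x,r)\cap\Omega|/|B(x,r)|$ is continuous, equals $1$ for small $r$, and tends to $0$ as $r\to\infty$ because $\Omega$ is bounded. Pick $r(x)$ so that this ratio equals $1/2$; then $B(x,r(x))$ satisfies the hypothesis of Corollary~\ref{cor:poincare}. The radii $r(x)$ are uniformly bounded, so the $5r$-covering lemma (available in the doubling metric measure space $\mathbb{H}^n$) produces a disjoint subcollection $\{B_i\}$ with $\Omega\subset\bigcup_i 5B_i$. Moreover, since $\{B_i\}$ are disjoint and $\mathbb{H}^n$ is doubling, the dilates $\{2B_i\}$ have uniformly bounded overlap $\sum_i\chi_{2B_i}\leq M$, with $M$ depending only on $N$.

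\textbf{Step 2 (Ball-by-ball estimate).} Apply Corollary~\ref{cor:poincare} to each $B_i$ to get $|B_i|^{(N-1)/N}\leq C|\partial\Omega\cap 2B_i|$. Set $m_i:=\inf_{5B_i}\omega$; since $2B_i\subset 5B_i$, the bound $\omega\geq m_i$ on $2B_i$ yields
\begin{equation*}
m_i^{(N-1)/N}|\partial\Omega\cap 2B_i|\;\leq\;\int_{\partial\Omega\cap 2B_i}\omega^{(N-1)/N}\,d\sigma.
\end{equation*}
Multiplying the two inequalities and raising to the power $N/(N-1)$ gives the key ball estimate
\begin{equation*}
|B_i|\,m_i\;\leq\;C'\left(\int_{\partial\Omega\cap 2B_i}\omega^{(N-1)/N}\,d\sigma\right)^{\!N/(N-1)}.
\end{equation*}

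\textbf{Step 3 (Summation and algebra).} The $A_1$ hypothesis gives $\int_{5B_i}\omega\leq C_0|5B_i|m_i=5^N C_0|B_i|m_i$. Summing over $i$ and using $\Omega\subset\bigcup_i 5B_i$, then Step 2, yields
\begin{equation*}
\int_\Omega\omega\,dv\;\leq\;5^NC_0\sum_i|B_i|m_i\;\leq\;C''\sum_i\left(\int_{\partial\Omega\cap 2B_i}\omega^{(N-1)/N}\,d\sigma\right)^{\!N/(N-1)}.
\end{equation*}
Since $N/(N-1)>1$, the elementary inequality $\sum_i a_i^p\leq(\sum_i a_i)^p$ for $a_i\geq 0$, $p\geq 1$ (proved by $a_i^p\leq a_i(\sum_j a_j)^{p-1}$) collapses the sum into a single power, and the bounded overlap of $\{2B_i\}$ contributes at most a factor of $M$:
\begin{equation*}
\sum_i\int_{\partial\Omega\cap 2B_i}\omega^{(N-1)/N}\,d\sigma\;\leq\;M\int_{\partial\Omega}\omega^{(N-1)/N}\,d\sigma.
\end{equation*}
Combining, one obtains the claim with $C_1=C_1(C_0,N)$.

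\textbf{Main obstacle.} The core technical ingredient is the covering-plus-overlap estimate in $\mathbb{H}^n$: the $5r$-covering lemma and the bounded overlap of $\{2B_i\}$ both rely on the doubling property of Haar measure, and one must confirm that both constants depend only on $N$. Once the covering is in place, the exponent $N/(N-1)$ in Corollary~\ref{cor:poincare} matches the target exponent exactly, so the remainder is a clean Whitney-style reduction; the role of the $A_1$ hypothesis is precisely to convert the $L^1$ mass of $\omega$ on each ball into $|B_i|\cdot m_i$ with a universal constant, which is what enables the algebraic manipulation above.
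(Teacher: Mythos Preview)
Your overall strategy matches the paper's: cover $\Omega$ by balls with the half--half property, apply Corollary~\ref{cor:poincare} ball by ball, use the $A_1$ bound to replace $\omega$ by its infimum, and collapse the sum via $\sum a_i^{p}\le(\sum a_i)^{p}$. There is, however, a genuine gap in Step~1/Step~3.

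\textbf{The bounded-overlap claim is false.} Disjointness of $\{B_i\}$ together with the doubling property does \emph{not} imply that $\{2B_i\}$ has bounded overlap. In $\mathbb{R}$ take $B_i=(5^{-i},\,3\cdot 5^{-i})$ (center $2\cdot5^{-i}$, radius $5^{-i}$); these are pairwise disjoint, yet $2B_i=(0,\,4\cdot5^{-i})$ all contain every sufficiently small positive point, so the overlap at $y>0$ is $\sim\log_5(1/y)$, unbounded as $y\to0$. The same phenomenon occurs in $\mathbb{H}^n$. Nothing in your construction (the choice $|B(x,r(x))\cap\Omega|=\tfrac12|B(x,r(x))|$ and the greedy $5r$ selection) rules this out: near a boundary point of $\Omega$ the radii $r(x_i)$ can form a geometric sequence exactly as above. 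Consequently the inequality $\sum_i\int_{\partial\Omega\cap 2B_i}\omega^{(N-1)/N}\le M\int_{\partial\Omega}\omega^{(N-1)/N}$ in your final step is not justified.

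\textbf{How the paper handles this, and how to fix your argument.} The paper sidesteps the overlap issue entirely by arranging the boundary pieces to lie in the \emph{disjoint} family. Concretely, choose the covering balls twice as large, so that it is $\tfrac12 B_\alpha$ (rather than $B_\alpha$) that carries the half--half property; after Vitali the $\tfrac12 B_i$ are pairwise disjoint, Corollary~\ref{cor:poincare} applied with $B=\tfrac12 B_i$ (so $2B=B_i$) actually yields, up to doubling constants, boundary pieces in the disjoint $\tfrac12 B_i$ once the scaling is set correctly. Then $\sum_i\int_{\partial\Omega\cap\frac12 B_i}\omega^{(N-1)/N}\le\int_{\partial\Omega}\omega^{(N-1)/N}$ is immediate from disjointness, with no overlap estimate needed. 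If you adjust your Step~1 so that the balls to which you apply Corollary~\ref{cor:poincare} are half of the Vitali-disjoint balls (and cover $\Omega$ by a fixed dilate), the rest of your argument goes through unchanged and becomes essentially identical to the paper's.
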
 

We now give the proof of this theorem by Proposition \ref{prop:poincare}.

\begin{proof}
Consider a covering $\cup_{\alpha\in \Lambda} B_\alpha$ of the domain $\Omega$ such that
each $B_\alpha$ satisfies the properties:
\begin{equation}
|\frac{1}{2}B_\alpha\cap \Omega|\geq \frac{1}{2}|\frac{1}{2}B_\alpha|, \quad |\frac{1}{2}B_\alpha\cap \Omega^c|\geq \frac{1}{2}|\frac{1}{2}B_\alpha|.
\end{equation}
In other words, $|\frac{1}{2}B_\alpha\cap \Omega|$ and $|\frac{1}{2}B_\alpha\cap \Omega^c|$ are both comparable to $|\frac{1}{2}B_\alpha|$.
By Vitali covering theorem, there exists a countable subset $\cup_{i=1}^{\infty} B_i$ such that
$\Omega\subset \cup_{i=1}^{\infty} B_i$, and $\{ \frac{1}{2}B_i\}$ are mutually disjoint. 
Therefore, 
\begin{equation}
\begin{split}
\omega(\Omega)=&\displaystyle \int_\Omega\omega(x)dv(x)\leq \displaystyle \sum_{i=1}^\infty
\int_{B_i\cap\Omega}\omega(x)dv(x)\\
\leq & \displaystyle \sum_{i=1}^\infty
\int_{B_i}\omega(x)dv(x)\\
\leq & \displaystyle \sum_{i=1}^\infty C_0 |B_i | \omega(p_i)\\
\leq & C_2(n)\displaystyle \sum_{i=1}^\infty |\frac{1}{4} B_i | \omega(p_i).\\
\end{split}
\end{equation}
Here $\omega(p_i)=\inf_{x\in B_i}\omega(x)$.

By using Corolary \ref{cor:poincare} to $B=\frac{1}{4}B_i$, 
\begin{equation}
\begin{split}
\omega(\Omega)\leq &C_3 \displaystyle \sum_{i=1}^\infty |\partial \Omega\cap \frac{1}{2}B_i|^{\frac{N}{N-1}}\omega(p_i)\\
\leq & C_3 \displaystyle \sum_{i=1}^\infty ( \int_{\partial \Omega\cap  \frac{1}{2}B_i} \omega(x)^{\frac{N-1}{N}}d\sigma(x))^{\frac{N}{N-1}}\\
\leq & C_3 (\displaystyle\sum_{i=1}^\infty \int_{\partial \Omega \cap  \frac{1}{2}B_i} \omega(x)^{\frac{N-1}{N}} d\sigma(x))^{\frac{N}{N-1}}\\
\leq & C_3 (\displaystyle\int_{\partial \Omega} \omega(x)^{\frac{N-1}{N}} d\sigma(x) )^{\frac{N}{N-1}}.\\
\end{split}
\end{equation}
\end{proof}

\begin{lemma}\label{lemma4.5}
$\frac{1}{\rho(u)^\alpha}$ is an $A_1$ weight for $0<\alpha<N=2n+2$ on the Heisenberg group $\mathbb{H}^n$.
\end{lemma}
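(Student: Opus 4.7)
The plan is to mimic the classical Euclidean argument that $|x|^{-\alpha}$ is an $A_1$ weight for $0<\alpha<n$, adapting it to the Heisenberg setting with the homogeneous norm $\rho$ and homogeneous dimension $N=2n+2$. Fix an arbitrary ball $B=B(q,r)\subset \mathbb{H}^n$ and let $d:=\rho(q)$ denote the distance of its center from the origin. The key observation is that the behaviour of $\rho^{-\alpha}$ on $B$ is governed entirely by the comparison between $d$ and $r$, so I would split the verification of the $A_1$ condition into two cases.

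In the first case, when $d\geq 2r$, I would use the left-invariance of $\rho$ together with the (genuine) triangle inequality $\rho(u)\leq \rho(uq^{-1})+\rho(q)$ and its reverse to show that, for every $u\in B(q,r)$,
\begin{equation*}
\tfrac{d}{2}\leq d-r\leq \rho(u)\leq d+r\leq \tfrac{3d}{2}.
\end{equation*}
Hence both $\frac{1}{|B|}\int_B \rho^{-\alpha}\,dv$ and $\inf_{z\in B}\rho(z)^{-\alpha}$ are comparable to $d^{-\alpha}$, and the $A_1$ condition holds with a constant depending only on $\alpha$.

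In the second case, when $d<2r$, the same triangle inequality gives $B(q,r)\subset B(0,3r)$, so I would estimate
\begin{equation*}
\int_{B(q,r)} \rho(u)^{-\alpha}\,dv(u) \leq \int_{B(0,3r)} \rho(u)^{-\alpha}\,dv(u).
\end{equation*}
Using polar coordinates on the stratified group $\mathbb{H}^n$, $dv = c\, s^{N-1}\,ds\,d\sigma$, which turns the right-hand side into $c'\int_0^{3r} s^{N-1-\alpha}\,ds$; this is finite precisely when $\alpha<N$ and equals a constant times $r^{N-\alpha}$. Dividing by $|B(q,r)|\asymp r^N$ yields an average bounded by $Cr^{-\alpha}$. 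On the other hand $\sup_{z\in B}\rho(z)\leq d+r\leq 3r$, so $\inf_{z\in B}\rho(z)^{-\alpha}\geq (3r)^{-\alpha}$, and the $A_1$ inequality follows with constant $C\cdot 3^\alpha$.

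The only nontrivial input is the polar decomposition and the resulting computation $\int_{B(0,R)}\rho^{-\alpha}\,dv \asymp R^{N-\alpha}$, which is where the hypothesis $\alpha<N$ enters; this is a standard fact for the Heisenberg group (e.g.\ via the Koranyi norm's homogeneity under the dilations $\delta_s$ and the fact that Haar measure scales by $s^N$). I expect the main (mild) obstacle to be setting up this polar identity cleanly, since integrability at the origin is the only failure mode. Beyond that, the two-case split is essentially immediate.
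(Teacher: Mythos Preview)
Your proposal is correct and is precisely the ``direct check by estimating the maximal function'' that the paper indicates (the paper gives no further detail beyond that one sentence). The two-case split according to whether $\rho(q)\geq 2r$ or $\rho(q)<2r$, together with the Kor\'anyi--Cygan triangle inequality and the dilation scaling $\int_{B(0,R)}\rho^{-\alpha}\,dv = cR^{N-\alpha}$ for $\alpha<N$, is exactly the standard argument the authors have in mind.
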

One can directly check this fact by estimating the maximal function of $\frac{1}{\rho(u)^\alpha}$.

In the following, we will give a proof of Theorem \ref{main}. Theorem \ref{main1} is then a consequence of Theorem \ref{main}, because  if $e^{4u}$ is an $A_1$ weight, by Theorem \ref{thm:A1isop}, on such a conformal Heisenberg group, the isoperimetric inequality is valid. Moreover, the isoperimetric constant depends only on the integral of the $Q'$-curvature.

\begin{proof}[Proof of Theorem \ref{main}] The PDE that the conformal factor $u$ satisfies is 
$$P' u= Q' e^{4u}.$$
Since $u$ is a pluriharmonic function, one has $\Delta_b^2u = T^2 u$. Recall that the fundamental solution of Paneitz operator $P'=2\Delta_b^2$ is given by $c'_1 \log \frac{1}{\rho (y^{-1}x)}$. By section \ref{sect:normal}, as the Webster scalar curvature at $\infty$ is nonnegative, we have the metric is normal. Namely, $u$ has an integral representation
\begin{equation}
u(x)=\frac{1}{c'_1}\int_{\mathbb{H}^1} \log \frac{\rho(y)}{\rho(y^{-1}x)} Q'(y) e^{4u(y)} dv(y)+ C.
\end{equation}
We now want to prove $e^{4u}$ is an $A_1$ weight. In other words, for any ball $B\subset \mathbb{H}^1$, 
\begin{equation}
\displaystyle M(e^{4u})(x) \leq C(\alpha) e^{4u(x)},
\end{equation}
for a.e. $x\in \mathbb{H}^1$, 
where $$ M(f)(x):= \sup_{r>0} \frac{1}{|B(x,r)|}\int_{B(x,r)}|f(y)| dv(y).
$$

Define $\alpha:= \int_{\mathbb{H}^1} Q' e^{4u} dv(x)$. By assumption, $\alpha<c'_1$. 
Note that we can assume $\alpha\neq 0$. As if $\alpha=0$, then $u$ is a constant. So the conclusion follows directly.
\begin{equation}
\begin{split}
&\displaystyle \frac{M(e^{4u})(x) }{e^{4u(x)}}\\
=& \displaystyle \sup_{r>0} 
\frac{\frac{1}{|B(x,r)|}\displaystyle \int_{B(x,r)}  \exp \left(\frac{4}{c'_1} \int_{\mathbb{H}^1} \log \frac{\rho(p)}{\rho(p^{-1}y)} Q'(p)e^{4u(p) }dv(p)\right) dv(y)   }{ \displaystyle \exp \left(\frac{4}{c'_1}  \int_{\mathbb{H}^1} \log \frac{\rho(p)}{\rho(p^{-1}x)} Q'(p)e^{4u(p) }dv(p)\right) }\\
=&\displaystyle \sup_{r>0} 
\frac{1}{|B(x,r)|}\displaystyle \int_{B(x,r)}  \exp \left(\frac{4\alpha}{c'_1} \int_{\mathbb{H}^1} \log \frac{\rho(p^{-1
}x)}{\rho(p^{-1}y)} \cdot \frac{Q'(p)e^{4u(p) } }{\alpha}dv(p)\right) dv(y). \\
\end{split}
\end{equation}
This is bounded by
\begin{equation}\label{w}
\begin{split}
&\displaystyle \sup_{r>0} 
\frac{1}{|B(x,r)|}\displaystyle \int_{B(x,r)}  \int_{\mathbb{H}^1} \left(\frac{\rho(p^{-1
}x)}{\rho(p^{-1}y)} \right)^\frac{4\alpha}{c'_1} \frac{Q'(p)e^{4u(p) } }{\alpha}dv(p) dv(y)\\ 
=&\displaystyle \sup_{r>0} 
 \int_{\mathbb{H}^1}  \frac{1}{|B(x,r)|}\displaystyle \int_{B(x,r)}  \left(\frac{\rho(p^{-1
}x)}{\rho(p^{-1}y)} \right)^\frac{4\alpha}{c'_1} dv(y) \frac{Q'(p)e^{4u(p) } }{\alpha}  dv(p). \\
\end{split}
\end{equation}
We know that by Lemma \ref{lemma4.5} $\frac{1}{\rho(x)^{\frac{4\alpha}{c'_1}}}$ is an $A_1$ weight. And so is $\frac{1}{\rho(p^{-1}x)^{\frac{4\alpha}{c'_1}}}$ for each fixed $p$.
This means 
\begin{equation}
\displaystyle \frac{\frac{1}{|B(x,r)|}\displaystyle\int_{B(x,r)} \frac{1}{\rho(p^{-1}y)^{\frac{4\alpha}{c'_1}}} dv(y) }{\frac{1}{\rho(p^{-1}x)^{\frac{4\alpha}{c'_1}}}}\leq C(\alpha),
\end{equation}
for each fixed $p$. Observe that $C$ is independent of $p$, one can substitute this inequality to the estimate (\ref{w}) and obtain that (\ref{w}) is bounded by
$$
\displaystyle  \int_{\mathbb{H}^1} C(\alpha) \frac{Q'(p)e^{4u(p) } }{\alpha}  dv(p)
=C(\alpha) .
$$
This shows that $e^{4u}$ is an $A_1$ weight. Once we have the $A_1$ property of $e^{4u}$, we can apply Theorem \ref{thm:A1isop} to it. It completes the proof of Theorem \ref{main}.
\end{proof}

Finally, we give the example, that shows $c'_1$ is the critical constant for the validity of the isoperimetric inequality.

\begin{exa}\label{exam:critical}Let $e^u\theta$ be a contact form on $\mathbb{H}^1$. And suppose $u$ is given by the following integral formula.
\begin{equation}
u(x)=\frac{1}{c'_1} \int_{\mathbb{H}^1} \log \frac{\rho (y)}{\rho (y^{-1}x)} c'_1 \delta_0 dv(y),
\end{equation}
where $\delta_0$ denotes Dirac delta function. It is obvious that the volume form $e^{4u(x)}= \frac{1}{\rho(x)^4}$ on $\mathbb{H}^1$ is not an $A_1$ weight. Moreover, such a CR manifold does not satisfy the isoperimetric inequality. This is because $e^{u} \theta= \frac{1}{\rho}\theta$ is the standard contact form on the cylinder $\mathbb{R}\times S^2 \cong \mathbb{H}^1\setminus \{(0,0,0)\}$. In particular, one can choose a sequence of rotationally symmetric annular domains $A(r_0, r)$ on $\mathbb{H}^1$, $r\rightarrow \infty$. The area of $\partial A(r_0, r)$ with respect to $e^{u} \theta$ is bounded in $r$. But the volume of $A(r_0,r)$ with respect to $e^{u} \theta$ tends to $\infty$ as $r\rightarrow \infty$. This gives a counterexample to the isoperimetric inequality. 
In this construction, $u$ is singular at the origin. But we can use the approximation argument to deal with the issue.
By choosing  $\phi_\epsilon (y)$ to be a sequence of compactly supported smooth functions approximating $c'_1\delta_0$, and defining
\begin{equation}u_\epsilon(x)=\frac{1}{c'_1} \int_{\mathbb{H}^1} \log \frac{\rho (y)}{\rho (y^{-1}x)} \phi_\epsilon (y) dv(y),\end{equation}
we construct a sequence of $u_\epsilon$ that approximates $u(x)=\log \frac{1}{\rho(x)}$ locally uniformly away from the origin.
Since $\phi_\epsilon (y)$ are compactly supported, when the annular domains $A(r_0, r)$, $r\rightarrow \infty$ are chosen such that $r_0$ is big enough (but fixed), the CR manifold $(\mathbb{H}^1, e^{u_\epsilon}\theta)$ does not satisfy the isoperimetric inequality. 
\end{exa}



\begin{bibdiv}
\begin{biblist}




\normalsize
\baselineskip=17pt







\bib{Beckner}{article}{
   author={Beckner, William},
   title={Sharp Sobolev inequalities on the sphere and the Moser-Trudinger
   inequality},
   journal={Ann. of Math. (2)},
   volume={138},
   date={1993},
   number={1},
   pages={213--242},
   issn={0003-486X},
   review={\MR{1230930 (94m:58232)}},
   doi={10.2307/2946638},
}

\bib{Branson}{article}{
   author={Branson, Thomas P.},
   title={Sharp inequalities, the functional determinant, and the
   complementary series},
   journal={Trans. Amer. Math. Soc.},
   volume={347},
   date={1995},
   number={10},
   pages={3671--3742},
   issn={0002-9947},
   review={\MR{1316845 (96e:58162)}},
   doi={10.2307/2155203},
}

\bib{BFM}{article}{
   author={Branson, Thomas P.},
   author={Fontana, Luigi},
   author={Morpurgo, Carlo},
   title={Moser-Trudinger and Beckner-Onofri's inequalities on the CR
   sphere},
   journal={Ann. of Math. (2)},
   volume={177},
   date={2013},
   number={1},
   pages={1--52},
   issn={0003-486X},
   review={\MR{2999037}},
   doi={10.4007/annals.2013.177.1.1},
}

\bib{CY}{article}{
   author={Case, Jeffrey S.},
   author={Yang, Paul},
   title={A Paneitz-type operator for CR pluriharmonic functions},
   journal={Bull. Inst. Math. Acad. Sin. (N.S.)},
   volume={8},
   date={2013},
   number={3},
   pages={285--322},
   issn={2304-7909},
   review={\MR{3135070}},
}

\bib{CQY}{article}{
   author={Chang, Sun-Yung A.},
   author={Qing, Jie},
   author={Yang, Paul C.},
   title={On the Chern-Gauss-Bonnet integral for conformal metrics on $\mathbf 
   R^4$},
   journal={Duke Math. J.},
   volume={103},
   date={2000},
   number={3},
   pages={523--544},
   issn={0012-7094},
   review={\MR{1763657}},
   doi={10.1215/S0012-7094-00-10335-3},
}


\bib{ChY}{article}{
   author={Chang, Sun-Yung A.},
   author={Yang, Paul C.},
   title={Extremal metrics of zeta function determinants on $4$-manifolds},
   journal={Ann. of Math. (2)},
   volume={142},
   date={1995},
   number={1},
   pages={171--212},
   issn={0003-486X},
   review={\MR{1338677 (96e:58034)}},
   doi={10.2307/2118613},
}

\bib{CCY}{article}{
   author={Chanillo, Sagun},
   author={Chiu, Hung-Lin},
   author={Yang, Paul},
   title={Embedded three-dimensional CR manifolds and the non-negativity of
   Paneitz operators},
   conference={
      title={Geometric analysis, mathematical relativity, and nonlinear
      partial differential equations},
   },
   book={
      series={Contemp. Math.},
      volume={599},
      publisher={Amer. Math. Soc., Providence, RI},
   },
   date={2013},
   pages={65--82},
   review={\MR{3202474}},
   doi={10.1090/conm/599/11905},
}

\bib{CCY2}{article}{
   author={Cheng, Jih-Hsin},
   author={Chiu, Hung-Lin},
   author={Yang, Paul},
   title={Uniformization of spherical $CR$ manifolds},
   journal={Adv. Math.},
   volume={255},
   date={2014},
   pages={182--216},
   issn={0001-8708},
   review={\MR{3167481}},
   doi={10.1016/j.aim.2014.01.002},
}

\bib{CL}{article}{
   author={Ch{\^e}ng, Jih Hsin},
   author={Lee, John M.},
   title={The Burns-Epstein invariant and deformation of CR structures},
   journal={Duke Math. J.},
   volume={60},
   date={1990},
   number={1},
   pages={221--254},
   issn={0012-7094},
   review={\MR{1047122 (91a:32023)}},
   doi={10.1215/S0012-7094-90-06008-9},
}

\bib{Fefferman}{article}{
   author={Fefferman, Charles L.},
   title={Monge-Amp\`ere equations, the Bergman kernel, and geometry of
   pseudoconvex domains},
   journal={Ann. of Math. (2)},
   volume={103},
   date={1976},
   number={2},
   pages={395--416},
   issn={0003-486X},
   review={\MR{0407320 (53 \#11097a)}},
}

\bib{FH}{article}{
   author={Fefferman, Charles},
   author={Hirachi, Kengo},
   title={Ambient metric construction of $Q$-curvature in conformal and CR
   geometries},
   journal={Math. Res. Lett.},
   volume={10},
   date={2003},
   number={5-6},
   pages={819--831},
   issn={1073-2780},
   review={\MR{2025058 (2005d:53044)}},
   doi={10.4310/MRL.2003.v10.n6.a9},
}
		
\bib{Fiala}{article}{
AUTHOR = {Fiala, F.},
TITLE = {Le probl\`eme des isop\'erim\`etres sur les surfaces ouvertes
\`a courbure positive},
JOURNAL = {Comment. Math. Helv.},
FJOURNAL = {Commentarii Mathematici Helvetici},
VOLUME = {13},
YEAR = {1941},
PAGES = {293--346},
ISSN = {0010-2571},
MRCLASS = {52.0X},
MRREVIEWER = {J. J. Stoker},
}


\bib{GG}{article}{
   author={Gover, A. Rod},
   author={Graham, C. Robin},
   title={CR invariant powers of the sub-Laplacian},
   journal={J. Reine Angew. Math.},
   volume={583},
   date={2005},
   pages={1--27},
   issn={0075-4102},
   review={\MR{2146851 (2006f:32049)}},
   doi={10.1515/crll.2005.2005.583.1},
}

\bib{GL}{article}{
   author={Graham, C. Robin},
   author={Lee, John M.},
   title={Smooth solutions of degenerate Laplacians on strictly pseudoconvex
   domains},
   journal={Duke Math. J.},
   volume={57},
   date={1988},
   number={3},
   pages={697--720},
   issn={0012-7094},
   review={\MR{975118 (90c:32031)}},
   doi={10.1215/S0012-7094-88-05731-6},
}

\bib{HK}{article}{
   author={Haj{\l}asz, Piotr},
   author={Koskela, Pekka},
   title={Sobolev met Poincar\'e},
   journal={Mem. Amer. Math. Soc.},
   volume={145},
   date={2000},
   number={688},
   pages={x+101},
   issn={0065-9266},
   review={\MR{1683160 (2000j:46063)}},
   doi={10.1090/memo/0688},
}

\bib{Hirachi}{article}{
   author={Hirachi, Kengo},
   title={Scalar pseudo-Hermitian invariants and the Szeg\"o kernel on
   three-dimensional CR manifolds},
   conference={
      title={Complex geometry},
      address={Osaka},
      date={1990},
   },
   book={
      series={Lecture Notes in Pure and Appl. Math.},
      volume={143},
      publisher={Dekker, New York},
   },
   date={1993},
   pages={67--76},
   review={\MR{1201602 (93k:32036)}},
}

\bib{Hirachi2}{article}{
   author={Hirachi, Kengo},
   title={$Q$-prime curvature on CR manifolds},
   journal={Differential Geom. Appl.},
   volume={33},
   date={2014},
   number={suppl.},
   pages={213--245},
   issn={0926-2245},
   review={\MR{3159959}},
   doi={10.1016/j.difgeo.2013.10.013},
}

\bib{Huber}{article}{
AUTHOR = {Huber, Alfred},
TITLE = {On subharmonic functions and differential geometry in the
large},
JOURNAL = {Comment. Math. Helv.},
FJOURNAL = {Commentarii Mathematici Helvetici},
VOLUME = {32},
YEAR = {1957},
PAGES = {13--72},
ISSN = {0010-2571},
MRCLASS = {30.00 (31.00)},
MRREVIEWER = {E. F. Beckenbach},
}

\bib{Jerison}{article}{
   author={Jerison, David},
   title={The Poincar\'e inequality for vector fields satisfying
   H\"ormander's condition},
   journal={Duke Math. J.},
   volume={53},
   date={1986},
   number={2},
   pages={503--523},
   issn={0012-7094},
   review={\MR{850547 (87i:35027)}},
   doi={10.1215/S0012-7094-86-05329-9},
}



\bib{YW15}{article}{
author={Wang, Yi},
title={The isoperimetric inequality and $Q$-curvature},
journal={Adv. Math.},
volume={281},
date={2015},
pages={823--844},
}

\bib{Webster}{article}{
   author={Webster, S. M.},
   title={On the transformation group of a real hypersurface},
   journal={Trans. Amer. Math. Soc.},
   volume={231},
   date={1977},
   number={1},
   pages={179--190},
   issn={0002-9947},
   review={\MR{0481085 (58 \#1231)}},
}









\end{biblist}
\end{bibdiv}

\end{document}